\newtheorem{theorem}{Theorem}
\newtheorem{claim}{Claim}
\newtheorem{problem}{Problem}
\newtheorem{remark}{Remark}
\newtheorem{lemma}{Lemma}
\newtheorem{corollary}{Corollary}
\newcommand{\NN}{{\mathbb N}}
\newcommand{\ZZ}{{\mathbb Z}}
\newcommand*{\ackname}{{\bf Acknowledgements. }}
\title{Pairs of orthogonal countable ordinals}
\author[C.Laflamme] {Claude Laflamme}
\thanks{*The first author was supported by NSERC of Canada Grant \# 690404}
\address{University of Calgary, Department of Mathematics and Statistics, Calgary, Alberta, Canada T2N 1N4}
\email{laf@math.ucalgary.ca}
\author [M.Pouzet]{Maurice Pouzet}
\thanks{**This research was completed  while the second  author was visiting the Dept of Mathematics \& Computer Science, of the Royal Military College of Canada, in February 2012}
\address{ICJ, Math\'ematiques, Universit\'e
Claude-Bernard Lyon1, 43 Bd. 11 Novembre 1918
F$69622$ Villeurbanne  cedex, France and University of Calgary, Department of Mathematics and Statistics, Calgary, Alberta, Canada T2N 1N4} \email{pouzet@univ-lyon1.fr }
\author [N.Sauer]{Norbert Sauer}
\thanks{***The third author was supported by NSERC of
Canada Grant \# 691325} \address{University of Calgary, Department of
Mathematics and Statistics, Calgary, Alberta, Canada T2N 1N4}
\email{nsauer@math.ucalgary.ca}
\author[I.Zaguia]{Imed Zaguia}\thanks{****Corresponding author}
\address{Department of Mathematics \& Computer Science,  Royal Military College,
P.O.Box 17000, Station Forces, Kingston, Ontario, Canada K7K 7B4}
\email{zaguia@rmc.ca}
\date{\today}
\begin{document}
\keywords{(partially ) ordered set, order preserving map, endomorphism, clone, orthogonal orders, rigid relational structure}
\subjclass[2000]{06A05}

\begin{abstract} We characterize pairs of orthogonal countable ordinals.
Two ordinals $\alpha$ and $\beta$ are orthogonal if there are two linear orders
$A$ and $B$ on the same set $V$ with order types $\alpha$ and $\beta$ respectively such that
the only maps preserving both orders are the constant maps
and the identity map. We prove that if $\alpha$ and $\beta$ are two countable ordinals, with $\alpha \leq \beta$,
then $\alpha$ and $\beta$ are orthogonal if and only if either $\omega + 1\leq \alpha$ or $\alpha =\omega$ and $\beta < \omega \beta$.
\end{abstract}
\maketitle

\input{epsf}

\section{Introduction}

The following notion has been introduced by Demetrovics, Miyakawa, Rosenberg, Simovici and Stojmenovi\'{c}
\cite{dmrs}:

Two orders $P$ and $Q$ on the same set are \textit{orthogonal} if
their only common order preserving maps are the identity map and the
constant maps.

In this paper, we say that two ordinals $\alpha$ and $\beta$ are \emph{orthogonal} if there exist two linear orders
$A$ and $B$ on the same set $V$ with order types $\alpha$ and $\beta$ respectively such that
the only maps preserving both orders are the constant maps
and the identity map. Let $\omega$ be the first infinite  ordinal.

We prove:
\begin{theorem} \label{cio}
If $\alpha$ and $\beta$ are two countable ordinals, with $\alpha \leq \beta$,
then $\alpha$ and $\beta$ are orthogonal if and only if either $\omega + 1\leq \alpha$ or $\alpha =\omega$ and $\beta <\omega \beta$.
\end{theorem}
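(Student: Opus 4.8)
The plan is to reduce everything to two special kinds of common order-preserving self-map and then treat the four regimes ($\alpha$ finite; $\alpha=\omega$ with $\beta=\omega\beta$; $\alpha=\omega$ with $\beta<\omega\beta$; and $\omega+1\le\alpha$) separately. First I would record two elementary tools. (i) If $f$ preserves a linear order $L$ then every fibre $f^{-1}(x)$ is $L$-convex; consequently a non-constant, non-injective common order-preserving map of $A$ and $B$ exists \emph{iff} $V$ has a proper subset $S$ with $2\le|S|$ that is convex for both $A$ and $B$ (collapse $S$ onto one of its points). (ii) An injective order-preserving self-map of a linear order is strictly increasing, and on a well-order it satisfies $f(x)\ge x$; hence a non-identity \emph{injective} common map must be simultaneously $A$- and $B$-increasing and non-surjective. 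Thus $A$ and $B$ are orthogonal precisely when they share no proper convex set of size $\ge 2$ \emph{and} admit no non-identity simultaneously increasing self-embedding, and each regime reduces to producing or excluding these two map-types.

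For the non-orthogonal regimes I would produce one of these maps. The finite case is elementary and I set it aside. The substantive negative case is $\alpha=\omega$, $\beta=\omega\beta$. Writing $V=\{a_0<_A a_1<_A\cdots\}$, colour each pair $i<j$ according to whether $a_i<_B a_j$ or $a_i>_B a_j$; by Ramsey's theorem there is an infinite monochromatic set, and since $B$ is a well-order the descending colour would give an infinite $B$-descending chain and is impossible, so there is an infinite $W\subseteq V$ on which $A$ and $B$ agree. I would then use the self-similarity $\beta=\omega\beta$ (a decomposition of $B$ into $\beta$ consecutive blocks of type $\omega$) to promote $W$ to an \emph{actual} common convex set, or to a nontrivial common embedding, contradicting orthogonality. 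This promotion is the main obstacle: the agreement set itself is not convex, and one must align it with the block structure so as to move a whole interval rather than a scattered set.

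For the orthogonal regimes I would construct suitable $A$ and $B$. When $\alpha=\omega$ and $\beta<\omega\beta$ the hypothesis says the least exponent in the Cantor normal form of $\beta$ is finite, i.e. $\beta$ carries a successor or finite-exponent ``tail''; I would build a type-$\omega$ order $A$ together with $B$ so that (a) no proper $A$-interval is $B$-convex, killing all collapses, and (b) the finite-exponent tail of $\beta$ acts as an anchor forcing any simultaneously increasing embedding to fix cofinally many points, hence to be the identity. When $\omega+1\le\alpha$ the structure past $\omega$ does the anchoring automatically: $A$ has an element with infinitely many predecessors (or a maximum) that any common order-preserving map must respect, so I would arrange $A,B$ so that this anchor is fixed and its fixing propagates downward, giving rigidity for \emph{every} $\beta\ge\alpha$. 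In both constructions the verification is exactly the analysis of the two map-types from the first step.

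I expect the genuine difficulties to be, on the negative side, the promotion of the Ramsey agreement set to a bona fide common convex set or embedding under $\beta=\omega\beta$, and, on the positive side, engineering a single pair $(A,B)$ that \emph{simultaneously} destroys every common convex set and every nontrivial simultaneous self-embedding. It is the interplay of these two obstructions --- together with the fact that type $\omega$ is exactly the borderline where the endomorphism monoid is flexible enough to defeat rigidity unless $\beta\ne\omega\beta$ --- that makes the characterization delicate.
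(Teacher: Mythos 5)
Your opening reduction (orthogonality $=$ no common nontrivial interval $+$ no nontrivial common self-embedding) is exactly the paper's Theorem \ref{inf3}, so the skeleton is right; but in each substantive regime the hard content is missing, and in two of them your proposed mechanism would fail. For the negative case $\alpha=\omega$, $\beta=\omega\beta$, the Ramsey agreement set is a dead end that you yourself flag: an infinite set on which the two orders agree is scattered with respect to the block structure of $\beta$, and nothing forces it to become a common interval or to generate a self-embedding --- no ``promotion'' is available. The paper bypasses Ramsey entirely: by Lemma \ref{lem:sierp}, every bichain $(\NN,\le,\le_{L_1})$ with $L_1$ of type $\alpha$ embeds into one whose second order has type $\omega\alpha$ (a greedy construction: send the $n$-th point, taken in the natural order, into the block of type $\omega$ assigned to its $\le_{L_1}$-position, choosing images increasing in the natural order, which is possible because each block is infinite); since $\omega\alpha\le\alpha$, such a copy sits on a \emph{proper} subset, and composing gives a non-surjective common self-embedding (Lemma \ref{lem:notrigid}). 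This is the step you need in place of Ramsey.

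On the positive side, both of your constructions assume that rigidity ``propagates'' from an anchor, and that is precisely what fails. For $\alpha=\omega$ and $\beta=\omega^{n+1}$ (the core case before any arithmetic reduction), a chain self-embedding of $\omega^{n+1}$ need not fix a single point ($x\mapsto\omega+x$ on $\omega^{2}$, say), so there is no ``finite-exponent tail anchor forcing cofinally many fixed points''; the real danger is an embedding shifting a whole $\omega$-block into a \emph{later} block, and the only way the paper kills this is by making the blocks pairwise incomparable: it needs a family of pairwise non-almost-embeddable semirigid bichains of type $(\omega,\omega)$ --- Theorem \ref{omega} and Corollary \ref{pz4}, built from the comb graphs $G(A)$ over an antichain of subsets of $\NN$ under translation and almost inclusion --- which is then fed into Lemma \ref{lem:10} through the structure Lemma \ref{lem:embeddingchain}. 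The same gap recurs for $\omega+1\le\alpha$: once your anchor (the top or limit point) is fixed, the map merely restricts to an arbitrary common self-embedding of the remaining parts of type $(\omega,\omega)$, so nothing ``propagates downward'' unless those parts already form a semirigid bichain; this is why the paper's Theorem \ref{cio2} takes an orthogonal pair of type-$\omega$ orders as its engine and interleaves the tails of $\alpha$ and $\beta$ into each other's $\omega$-parts. In short, the one idea your plan is missing --- and on which every positive case rests --- is the explicit construction of (many, pairwise non-embeddable) semirigid bichains of type $(\omega,\omega)$; granted that, the remaining work is bookkeeping of the kind done in Lemmas \ref{equiclasses} and \ref{arith1}.
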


The proof of Theorem \ref{cio} will be done in Section \ref{section:
cio}, using the following result.

\begin{theorem}\label{omega}
There are  $2^{\aleph_0}$ linear orders $L$ of order type $\omega$ on $\NN$ such that $L$ is orthogonal to the natural order on $\NN$.\end{theorem}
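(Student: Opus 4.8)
The plan is to reformulate orthogonality with the natural order $\le$ as a statement about monotone maps, and then to build a continuum-sized family of type-$\omega$ orders whose disagreements with $\le$ are rigid enough to annihilate every nontrivial common endomorphism. First I would record the reformulation. Write $r(x)$ for the rank of $x$ in $L$, so that $x<_L y \iff r(x)<r(y)$; here $r$ is a bijection of $\NN$ and $L$ is recovered from $r$. Then $f$ preserves $\le$ iff $f$ is non-decreasing, and $f$ preserves $L$ iff $r\circ f\circ r^{-1}$ is non-decreasing. Equivalently, $f$ is an endomorphism of the partial order $P=\ \le\cap\le_L$; since the identity and the constant maps already lie in $\mathrm{End}(\le)\cap\mathrm{End}(\le_L)\subseteq\mathrm{End}(P)$, it suffices to build $L$ so that $\mathrm{End}(P)=\{\mathrm{id}\}\cup\{\text{constants}\}$. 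The basic local fact, obtained by playing the two monotonicities against each other, is that such an $f$ maps every \emph{agreeing} pair (one with $x<y$ and $x<_L y$) to an agreeing-or-collapsed pair and every \emph{inverted} pair (one with $x<y$ but $y<_L x$) to an inverted-or-collapsed pair; the inverted pairs are precisely the incomparabilities of $P$.

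Next comes the rigidity engine. Since $f$ is non-decreasing for $\le$, it is either bounded (hence eventually constant) or strictly cofinal to $\infty$. I would isolate the sandwich lemma: a strictly increasing $f$ that fixes a cofinal set must be the identity, because if $f(c)=c$ then $n\le f(n)\le f(c)-(c-n)=n$ for every $n\le c$. So the construction has two jobs: (i) force any non-injective common endomorphism to be constant, by arranging the inverted pairs so that collapsing any proper interval propagates, through the inversions, to collapse everything; and (ii) force a strictly increasing common endomorphism to fix a cofinal set, typically by pinning $f(0)=0$ with an initial gadget and propagating fixed points along a cofinal sequence of gadgets.

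Finally, the family and the count. I would place finite local-reversal gadgets along $\NN$, keeping the blocks between them in the natural order so that $L$ has type $\omega$ (a sum of finitely many finite ordinals, cofinally often), and encode a parameter $X\subseteq\NN$ in the positions and sizes of the gadgets, chosen \emph{aperiodically}. Aperiodicity is what destroys shift-type maps $f(n)=n+c$: the periodic "swap adjacent pairs" order $1,0,3,2,\dots$ fails to be orthogonal precisely because it commutes with the shift by $2$. Distinct admissible choices of $X$ yield distinct orders $L_X$, so the family has size $2^{\aleph_0}$.

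The hard part will be engineering one gadget pattern that discharges all of these requirements simultaneously. The aperiodic placement kills shifts and supplies continuum-many degrees of freedom with little effort, and forcing $f(0)=0$ with propagation handles the strictly increasing case via the sandwich lemma. The genuine obstacle is ruling out the plateau maps — the non-injective, non-constant monotone maps compatible with both orders — while keeping the type exactly $\omega$: the gadgets must interlock tightly enough that no proper interval can collapse consistently with both $\le$ and $L$ unless the entire map is constant, and this tightness must be maintained uniformly across all continuum-many parameters.
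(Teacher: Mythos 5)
Your opening reduction fails at its very first step, and it fails unavoidably. You claim it suffices to build $L$ so that $\mathrm{End}(P)=\{\mathrm{id}\}\cup\{\text{constants}\}$, where $P:=(\NN,\;\le\cap\le_L)$. The implication you invoke is sound (every common endomorphism of the two chains is an endomorphism of $P$), but the target condition is unattainable: \emph{no} order $L$ of type $\omega$ (indeed no order at all on $\NN$) makes the intersection poset semirigid. First, $P$ always has at least one comparable pair $a_0<_P a_1$, since if $\le$ and $\le_L$ disagreed on every pair then $\le_L$ would be the reverse of $\le$, of type $\omega^*$. Now let $D:=\{x: x\le_P a_0\}$ and define $f(x)=a_0$ for $x\in D$ and $f(x)=a_1$ otherwise. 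Since $D$ is a down-set of $P$ containing $a_0$ but not $a_1$, this $f$ preserves $\le_P$, yet it is neither constant nor the identity. So $\mathrm{End}(P)$ is never trivial. This is exactly why the paper's Theorem \ref{inf3} characterizes orthogonality by primality of $P$ together with a condition on the \emph{embeddings} of $P$ only: the non-injective common endomorphisms (your ``plateau maps'') cannot be excluded by controlling $\mathrm{End}(P)$; they are excluded by primality, because the fibers of a non-injective map preserving both chains are intervals of both chains, hence autonomous sets of the bichain.

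Even reading your reduction charitably as being about common endomorphisms of $\le$ and $\le_L$ (which your later discussion suggests), the proposal stops exactly where the proof must begin. You correctly identify the three requirements --- no nontrivial common interval, killing shift-like embeddings via aperiodicity, and the extensivity/sandwich argument for strictly increasing maps --- but you explicitly defer the construction of a gadget pattern meeting all of them simultaneously, and that construction is the entire mathematical content of the theorem. The paper supplies it as follows: attach pendant vertices at the points of a set $A\subseteq\NN$ to the one-way infinite path, obtaining the comb graph $G(A)$; prove $G(A)$ is prime when $1\notin A$ and embedding rigid when $A$ is not eventually periodic (any self-embedding must act as a translation $t_k$ on the path, and $t_k(A)\subseteq A$ forces eventual periodicity) --- this is Lemma \ref{pz5}; realize $G(A)$ as the comparability graph of a poset $P(A)$ which is the intersection of a chain of type $\omega$ and a chain of type $\omega^*$ (Lemma \ref{lem:PA}); then dualize the $\omega^*$ chain, which does not change the set of common endomorphisms, to obtain two orthogonal orders both of type $\omega$ (Corollary \ref{pz4}); finally, continuum many suitable sets $A$ give the count $2^{\aleph_0}$. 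Your aperiodicity idea matches the paper's rigidity mechanism in spirit, but without an explicit prime and embedding-rigid structure the argument never gets off the ground.
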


This result will be given in Section \ref{section: omega}, and follows
from a simple construction which gives a bit more (see Corollary \ref{pz4} ).

Let us say few words about the history of this notion of orthogonality and our motivation.

The notion of orthogonality originates in the theory of clones.
The first examples of pairs of orthogonal finite orders were given by Demetrovics, Miyakawa, Rosenberg, Simovici and Stojmenovi\'c \cite{dmrs}; those
orders were in fact bipartite. More examples can be found in \cite{dr}. Nozaki, Miyakawa, Pogosyan
and Rosenberg \cite{rms} investigated the existence of a linear order orthogonal to a given finite linear order. They observed that there is always one provided that  the number of elements is  not equal to three and proved:

\begin{theorem} \cite{rms} \label{rms}
The proportion  $q(n)/n!$ of linear orders orthogonal to the natural order on $[n]:=\{1,...,n\}$ goes to $e^{-2}=0.1353...$ when $n$ goes to infinity.
\end{theorem}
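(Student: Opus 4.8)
The plan is to recast orthogonality as a structural property of the permutation relating the two orders, and then to reduce the statement to the asymptotic enumeration of \emph{simple} permutations. Identify a linear order $L$ on $[n]$ with the permutation $\pi$ of $[n]$ determined by $\pi(1)<_L\cdots<_L\pi(n)$, and call a set $I\subseteq[n]$ a \emph{common interval}, or a \emph{block} of $\pi$, if it is at once an interval of the natural order and an interval of $L$. Say $\pi$ is \emph{simple} if its only blocks are the singletons and $[n]$ itself. The first goal is the clean characterization
\[L\text{ is orthogonal to the natural order}\iff\pi\text{ is simple},\]
so that $q(n)$ is exactly the number $s_n$ of simple permutations of $[n]$.

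I would prove the forward implication by contraposition. If $\pi$ is not simple, choose a block $F$ with $2\le|F|\le n-1$ and set $a=\min F$. Define $f\colon[n]\to[n]$ by $f(x)=a$ for $x\in F$ and $f(x)=x$ otherwise. This $f$ is non-decreasing for the natural order, being the identity off the interval $F$ and constant $=\min F$ on it; and it preserves $L$ as well, since $F$ is an interval of $L$, so every $y\notin F$ lies either $L$-below all of $F$ (whence $y<_L a$) or $L$-above all of $F$ (whence $a<_L y$), and in both cases collapsing $F$ to $a$ respects $\le_L$. Because $F$ is neither a singleton nor all of $[n]$, the map $f$ is neither the identity nor constant, so $L$ is not orthogonal. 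For the converse, let $f$ be any common order preserving map that is not constant. Each fibre $f^{-1}(v)$ is an interval for both orders, hence a block; as $f$ is non-constant it has at least two fibres, so every fibre is a proper block, and simplicity forces each to be a singleton. Thus $f$ is an injection of a finite set, hence a bijection preserving the natural order, hence the identity, and $L$ is orthogonal.

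It then remains to show $s_n/n!\to e^{-2}$, and I would obtain this by inclusion--exclusion over blocks. A block of size two is precisely a pair $\{v,v+1\}$ that is consecutive in the natural order and $L$-adjacent; for uniformly random $L$ each of the $n-1$ such pairs is $L$-adjacent with probability $2/n$, so the expected number of size-two blocks tends to $2$. A Poisson approximation for these (weakly dependent) indicators then gives that the probability of having no size-two block tends to $e^{-2}$. A first-moment estimate shows that the expected number of proper blocks of a fixed size $k$ is $O(n^{2-k})$ for small $k$ and is likewise $o(1)$ for $k$ near $n$, so the probability of possessing any proper block of size at least three is $o(1)$; hence simplicity is, up to an $o(1)$ error, equivalent to the absence of size-two blocks, and $s_n/n!\to e^{-2}$.

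The characterization above is elementary, so the main obstacle is the enumerative limit. The two delicate points are establishing the Poisson limit for the size-two indicators (the dependence is mild but must be controlled, e.g.\ by the Chen--Stein method or by computing factorial moments directly) and bounding uniformly the contribution of the larger blocks in the inclusion--exclusion. Alternatively, one may bypass these estimates by quoting the enumeration of simple permutations through the functional equation arising from the substitution decomposition, which produces the same constant $e^{-2}$; I would present the direct inclusion--exclusion as the primary route since it exhibits the origin of the factor $e^{-2}$ most transparently.
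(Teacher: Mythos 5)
The paper does not prove this statement at all: it is quoted from Nozaki--Miyakawa--Pogosyan--Rosenberg \cite{rms}, and the only hint given about its proof is the remark that their counting argument rests on the fact that two linear orders on the same \emph{finite} set are orthogonal if and only if they have no common nontrivial interval. Your proposal follows exactly that route. Your first half proves this equivalence (orthogonal $\iff$ the associated permutation is simple), and that argument is correct and complete: collapsing a nontrivial common interval onto its minimum yields a common order-preserving map that is neither constant nor the identity, and conversely the fibres of a non-constant common order-preserving map are common intervals, so simplicity forces injectivity, and an injective order-preserving self-map of a finite chain is the identity. (Finiteness is essential in that last step, which is precisely why the infinite case treated in the rest of the paper requires entirely different tools; cf.\ Theorem \ref{inf3}.)

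The second half is where the real content of the cited theorem lies, and there your text is a plan rather than a proof. You assert, but do not establish, that $\Pr(\text{no size-two block})\to e^{-2}$ --- this is the classical asymptotics for permutations without successions, and it does require the factorial-moment or Chen--Stein computation you mention; and you assert, but do not carry out, the uniform bound $\sum_{k=3}^{n-1}(n-k+1)^{2}/\binom{n}{k}=o(1)$ controlling blocks of size at least three (your per-$k$ estimate $O(n^{2-k})$ is right, but the sum over all $k$ up to $n-1$ still has to be handled, since a crude uniform bound only gives $O(1)$). Both claims are true and standard, so the strategy is sound and the constant $e^{-2}$ emerges for the right reason; as written, though, these two steps are placeholders, and they are exactly the substance of the result in \cite{rms} (rediscovered in \cite{A-A-K} as the asymptotic enumeration of simple permutations, which you could legitimately cite instead, as you note).
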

Their counting argument was based on the fact that two linear orders
on the same \emph{finite} set are orthogonal if and only if they do
not have a common nontrivial interval. The notion capturing the
properties of intervals of a linear order was extended long ago to
posets, graphs and binary structures and a decomposition theory has
been developed (eg see \cite{fraisse2}, \cite{gallai},
\cite{fraisse3}, \cite{Eh-Ha-Ro}).  One of the terms in use for this
notion is \emph{autonomous set}; structures with no nontrivial
autonomous subset -the building blocks in the decomposition theory-
are called \emph{prime} (or \emph{indecomposable}). With this
terminology, the above fact can be expressed by saying that two linear
orders $\mathcal L$ and $\mathcal M$ on the same finite set $V$ are
orthogonal if and only if the binary structure $B: =(V, \mathcal L,
\mathcal M )$, that we call a \emph{bichain}, is prime. This leads to
results relating primality and orthogonality (\cite{Ri-Za}, \cite
{za}).

The notion of primality  has reappeared in recent years under a quite different setting:  a  study of permutations motivated by the Stanley-Wilf conjecture, now settled by Marcus and Tard\"os   \cite{Marcus}. This study,  which developed in many papers,  can be presented as follows:
To a permutation $\sigma$ on $[n]$ associate first  the linear order $\leq_{\sigma}$ defined by $x\leq_{\sigma}y$ if $\sigma(x)\leq \sigma(y)$ for the natural order on $[n]$;  next  associate the bichain $B_{\sigma}: =([n], (\leq, \leq_{\sigma}))$. On the set  $\mathfrak S:= \cup_{n\in \NN}\mathfrak S_n$ of all permutations,   set  $\sigma\leq \tau$ if $B_{\sigma}$ is embeddable into $B_{\tau}$.  Say that a subset $\mathfrak C$ of $\mathfrak S$ is \emph{hereditary} if $\sigma\leq \tau$ and $\tau\in \mathfrak C$ imply $\sigma\in \mathfrak C$.  The goal is to evaluate the growth rate of the function  $\varphi_{\mathfrak C}$ which counts for each integer $n$ the numbers $\varphi_{\mathfrak C}(n)$ of permutations $\sigma$ on $[n]$ which belong to $\mathfrak C$ (the Stanley-Wilf conjecture asserted  that $\varphi_{\mathfrak C}$ is bounded by an exponential if $\mathfrak C\not = \mathfrak S$).  For this purpose, simple permutations were introduced. A permutation $\sigma$ is \emph{simple} if $\leq_{\sigma}$ and the natural order $\leq$ on $[n]$ have no nontrivial interval in common. Arbitrary permutations being  obtained by means of simple permutations,  the enumeration of permutations belonging to a hereditary class of permutations can be then reduced to the enumeration of  simple permutations belonging to that class. This fact was illustrated in many papers (\cite{A-A},  \cite{klazar}, see also \cite{Br} for a survey on simple permutations and \cite{A-A-K}, where the asymptotic result mentioned in Theorem \ref{rms} is rediscovered). Notably, Albert and Atkinson \cite{A-A} proved that the generating series $\sum_{n\in \NN}\varphi_{\mathfrak C}(n)z^n$ is algebraic provided that $\mathfrak C$ contains only finitely many simple permutations. They asked for possible extensions of their result to hereditary sets  containing infinitely many simple permutations.

Tools of the theory of relations provide easy ways to produce examples
of hereditary sets containing infinitely many simple permutations (but
not to answer the Albert-Atkinson question).  Let us say that a subset
$\mathfrak C$ of $\mathfrak S$ is an \emph{ideal} if it is non-empty,
hereditary and \emph{up-directed}, this last condition meaning that
every pair $\sigma, \sigma' \in \mathfrak C$ has an upper bound
$\tau \in \mathfrak C$. Let us call \emph{age} of a bichain $B$ the
set $age (B):= \{\sigma \in \mathfrak S: B_{\sigma}\; \text{is embeddable into }\; B\}$.
Then, a subset $\mathfrak C$ of $\mathfrak
S$ is an ideal if and only if $\mathfrak C$ is the age of some bichain
$B$.  Furthermore, an ideal $\mathfrak C$ is the age of a prime
bichain if and only if every permutation belonging to $\mathfrak C$ is
dominated by some simple permutation belonging to $\mathfrak C$ (these
statements, which hold in the more general context of the theory of
relations, are respectively due to Fra\"{\i}ss\'e \cite{fraisse} and
Ille \cite{Ille}).  Because of these results, the study of ideals
leads to the study of countable prime bichains. It is then natural to
ask which are the possible pairs of order types of linear orders with this
property. Now, it must be noticed that in the infinite case, primality
and orthogonality no longer coincide.  Thus, the next question is about
pairs of orthogonal linear orders.  In \cite{sz} it was proved that
the chain of the rational numbers admits an orthogonal linear order of
the same order type. Here we examine the case of countable well
ordered chains. 

\section{Basic notations and results}

Let $V$ be a set. A \emph{binary relation} on $V$ is a subset $\rho$
of the cartesian product $V\times V$, but for convenience we write $x\rho y$ instead of
$(x,y)\in \rho$. A map $f:V\rightarrow V$ \emph{preserves} $\rho$ if:
$$x\rho y \Rightarrow f(x)\rho f(y)$$ for all $x,y \in V$.

These two notions are enough to present our results. In order to prove
them, we will need a bit more.

A \emph{binary structure} is a pair $R:=(V,(\rho_i)_{i\in I})$ where
$V$ is a set and each $\rho_i$ is a binary relation on $V$. If $F$ is
a subset of $V$, the restriction of $R$ to $F$ is $R_{\restriction F}:
=(F,((F\times F)\cap \rho_i)_{i\in I})$. If $R:=(V,(\rho_i)_{i\in I})$
and $R':=(V',(\rho^{\prime}_i)_{i\in I})$ are two binary structures, a
\emph{homomorphism} of $R$ into $R'$ is a map $f: V\rightarrow V'$
such that the implication

\begin{equation}\label{equ:homo}
x\rho_i y \Rightarrow f(x)\rho^{\prime}_i f(y)
 \end{equation}
holds for every $x,y\in V$, $i\in I$. If $f$ is one-to-one and
implication (\ref{equ:homo}) above is a logical equivalence, this is
an \emph{embedding}. If $R=R'$, a homomorphism is an
\emph{endomorphism}.  We will denote by $R\leq R'$ the fact that there
is an embedding of $R$ into $R'$ and by $R\leq_{fin}R'$ the fact that
$R_{\restriction (V\setminus F)}\leq R'$ for some finite subset $F$ of
$V$.

Let $R:=(V,(\rho_i)_{i\in I})$ be a binary structure. A subset $A$ of $V$ is {\it autonomous} (other terms are   interval,  module and clan)
in $R$ if for all $v\not\in A$ and for all $a,a^{\prime} \in A$ and for all $i\in I$, the following property holds:
\begin{equation*}
(v\rho_i a\Rightarrow v \rho_i a^{\prime})\;\mathrm{and}\;(a\rho_i v\Rightarrow a^{\prime} \rho_i v).
\end{equation*}

The empty set, the whole set $V$ and the singletons in $V$ are
autonomous sets and are called \textit{trivial}. We say that $R$ is
\emph{prime} if it has no nontrivial autonomous set, it is
\emph{semirigid} if the identity map and the constant maps are the
only endomorphisms of $R$ and it is \emph{embedding rigid} if the
identity map is the only embedding from $R$ to $R$. Finally, we say
that two binary relations $\rho$ and $\rho'$ on a set $V$ are
\emph{orthogonal} (or \emph{perpendicular}) if the
binary structure $(V, \rho, \rho')$ is semirigid.

\begin{remark}\label{rem:semi-prime}
Semirigidity is often defined for structures made of reflexive
relations. Indeed, in that case, all constant functions are
endomorphisms and, more generally, any map mapping an autonomous set
$A$ on an element $a\in A$ and leaving fixed the complement of $A$ is
an endomorphism. Thus, if $R$ is semirigid, $R$ must be prime, and in
any case, embedding rigid.
\end{remark}

The binary structures we consider in this paper are made of one or two
binary relations. Except in Section \ref{section: omega}, these binary
relations are orders, that is reflexive, antisymmetric and transitive
relations.  Our terminology on posets and chains agrees essentially
with \cite{Ro}.  An \emph{ordered set}, \emph{poset} for short, is a
pair $P:=(V, \mathcal \rho)$ where $\rho$ is an order relation on $V$.
Instead of $\rho$ and $(x,y)\in \rho$ we may use the symbol $\leq$ and
write $x\leq y$. When needed, we will use other symbols like $\leq'$,
$\leq_P$, $\mathcal P$.  The \emph{dual} of $P$ is $P^*:=(V,
\rho^{-1})$ where $\rho^{-1}:=\{(x,y):(y,x)\in \rho\}$. If $P:=(V,
\leq)$ is a poset, the \emph{comparability graph} of $P$, which we
denote by $Comp(P)$, is the graph whose vertex set is $V$ and whose edges are
the pairs $\{x,y\}$ such that either $x<y$ or $y<x$.  A \emph{chain}
is a poset in which the order is \emph{linear} (or total), a
\emph{bichain} is a binary structure made of two linear orders, say
$\mathcal L, \mathcal M$, on the same set $V$ that we will denote
$B:=(V, \mathcal L, \mathcal M)$ (instead of $B:=(V, (\mathcal L,
\mathcal M))$). If $C:=(V, \mathcal L)$ is a chain, an autonomous
subset is simply an interval, that is, a subset $A$ such that $x\leq
z\leq y$ and $x,y\in A$ imply $z\in A$. If $B:=(V, \mathcal L,
\mathcal M)$ is a bichain, an autonomous subset is a common interval
of the chains $(V, \mathcal L)$ and $(V,\mathcal M)$.  We suppose the
reader is familiar with the notions of order types of chains and of well
ordered chains, alias \emph{ordinal numbers}, as explained in
\cite{Ro}. In the proof of Theorem \ref{cio2}, an ordinal is the set
of strictly smaller ordinals. We denote by $\omega$ the order type of
the chain of non-negative integers $(\NN, \leq)$ where $\leq$ is the
natural order, and by $\omega^*$ the order type of its reverse. We use
Greek letters, like $\alpha, \beta, \dots$ to denote order types.  If
$\alpha$ and $\beta$ are order types of chains $C$ and $C'$, then we
set $\alpha \leq \beta$ if $C$ is embeddable into $C'$.  We denote by
$\omega \alpha$ the lexicographical sum of copies of $\omega$ indexed
by a chain of type $\alpha$. We note that for us $0$ is a limit
ordinal.

We also note that the condition $\beta <\omega \beta$ in Theorem
\ref{cio} could be expressed as $ind(\beta)< \omega^\omega$. Indeed,
define the \emph{right indecomposable part} of an ordinal $\gamma$ as
the ordinal $ind(\gamma)$ defined as follows: $ind(\gamma):=0$ if
$\gamma=0$ and otherwise $ind(\gamma):=\delta$ where $\delta$ is the least
nonzero ordinal such that $\gamma = \gamma' +\delta$ for some
$\gamma'$. Then observe that:

\noindent {\bf Fact.}
 For a nonzero ordinal $\gamma$ the following properties are equivalent.\\
(i) $ind(\gamma)\geq \omega^\omega$.\\
(ii) $\omega \gamma=\gamma$.\\

\noindent \emph{Proof of the Fact}.
According to the Cantor Decomposition Theorem, $\gamma
=\omega^{\beta_0}+\cdots+\omega^{\beta_k}$ with $\beta_0\geq \cdots
\geq \beta_k$. Hence, $\omega \gamma=
\omega^{1+\beta_0}+\cdots+ \omega^{1+\beta_k}$. Since
$ind(\gamma)=\omega^{\beta_k}$, $ind(\gamma)\geq \omega^\omega$ if and only if $\beta_k\geq \omega$. If $\beta_k \geq \omega$, then $\beta_i \geq \omega$ for every $i$, hence
$1+\beta_i = \beta_i$ for all $i$ and therefore $\omega \gamma=\gamma$.  Conversely, if $\omega \gamma=\gamma$, then since the Cantor decomposition is unique, $1+\beta_i=\beta_i$ for every $i$, amounting to $\beta_k\geq \omega$.\hfill$\Box$\\

Finally we recall that a map $f$ from a chain
$C:=(V, \leq)$ into itself is \emph{extensive} if $x\leq f(x)$ for
every $x\in V$. We will use several times the fact that a one-to-one
order preserving map on a well ordered chain is extensive.

We will use the following results.

\begin{theorem}\label{thm:gallai-kelly}
A poset $P:=(V, \mathcal P)$ is prime if and only if
$Comp(P)$ is prime.  Moreover, if $Comp(P)$ is prime then\\
(a) the edge set of $Comp(P)$ has exactly two transitive orientations  (namely $\mathcal P$ and $\mathcal P^{-1}$),\\
(b) if the order $\mathcal P$ is the intersection of two linear orders then no other pair of linear orders yields the
same intersection.
\end{theorem}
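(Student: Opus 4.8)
The plan is to prove the stated equivalence in both directions and then to deduce clause (a) from Gallai's analysis of transitive orientations and clause (b) from (a) applied to the complementary graph. First I would record the one easy implication: every autonomous set $A$ of $P=(V,\mathcal P)$ is a module of $Comp(P)$, because autonomy forces each $v\notin A$ to lie entirely above, entirely below, or entirely incomparable to $A$, so in particular $v$ is joined in $Comp(P)$ either to all of $A$ or to none of it. Consequently, if $Comp(P)$ is prime then $P$ has no nontrivial autonomous set, i.e.\ $P$ is prime. The reverse implication is the delicate one, because the converse inclusion fails: a module of $Comp(P)$ need not be autonomous in $P$. For the $3$-element chain $a'<v<a$ the pair $\{a,a'\}$ is a module of the triangle $Comp(P)=K_3$ but is not an interval of the chain, since $v$ sits strictly between $a'$ and $a$. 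So from a nontrivial module I must manufacture a possibly different nontrivial autonomous set.

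For that, and for clause (a), I would bring in Gallai's forcing relation (the $\Gamma$-relation) on the edges of $Comp(P)$: two incident edges $\{x,y\}$ and $\{x,z\}$ force one another when $\{y,z\}$ is a non-edge, and the resulting implication classes (the transitive closure of forcing) are the invariant objects. The two facts I would establish, which form the technical core and the place where I expect the real work, are: (i) every transitive orientation orients each implication class coherently, so that the transitive orientations are parametrized by independent sign choices over the implication classes; and (ii) Gallai's structure theorem tying the implication classes to the modular decomposition, from which one reads off that $Comp(P)$ is prime precisely when there is a single implication class together with its reverse. Part (a) is then immediate: a single implication class yields exactly two transitive orientations, and these must be $\mathcal P$ and $\mathcal P^{-1}$, since both are already transitive orientations. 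The same machinery supplies the missing implication of the equivalence: if $Comp(P)$ is not prime, its modular decomposition is nontrivial, and the corresponding factor produces a nontrivial autonomous set of $P$ (equivalently, one matches the strong modules of $Comp(P)$ with the strong autonomous sets of $P$), so $P$ is not prime.

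Finally I would derive (b) from (a) by passing to the complement. Suppose $\mathcal P=L\cap M$; then $\mathcal P$ has dimension at most $2$, so the complement $\overline{Comp(P)}$ is again a comparability graph, and each of $L,M$ orients its edges (precisely the pairs incomparable in $\mathcal P$) transitively, giving a transitive orientation of $\overline{Comp(P)}$ and its reverse. Because a vertex sees all or none of a set in $Comp(P)$ if and only if it does so in $\overline{Comp(P)}$, modules are invariant under complementation, so $\overline{Comp(P)}$ is prime whenever $Comp(P)$ is. Applying (a) to $\overline{Comp(P)}$, it has exactly two transitive orientations, necessarily a single reverse-pair; since an unordered realizer $\{L,M\}$ of $\mathcal P$ corresponds to exactly such a pair, the realizer is unique, which is clause (b).

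The main obstacle throughout is step (ii): controlling how modules interact with Gallai's implication classes. This is exactly the content that makes comparability graphs special, and it is the part that the easy observation ``autonomous $\Rightarrow$ module'' cannot reach, since that observation runs in the wrong direction for the converse.
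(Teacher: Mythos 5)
Before anything else, note what the paper does with this statement: it does not prove it. The theorem is imported from the literature, with Gallai cited for finite posets and Kelly for the extension to infinite ones, and the infinite case is the one this paper actually needs, since Theorem \ref{cio} concerns countably infinite chains. So your proposal has to stand on its own, and its core does not: your facts (i) and (ii) are precisely the content of the Gallai--Kelly theory, and you assert them rather than prove them. Moreover, where you do make them precise, they are wrong in two places. First, ``$Comp(P)$ is prime precisely when there is a single implication class together with its reverse'' fails in the backward direction: for the poset on $\{a,b,c\}$ with $a<b$ and $c<b$, the comparability graph is the path with edges $\{a,b\}$ and $\{b,c\}$; these two edges force one another (since $\{a,c\}$ is a non-edge), so there is a single implication class and exactly two transitive orientations, yet $\{a,c\}$ is a nontrivial module (and a nontrivial autonomous set of the poset). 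Only the direction ``prime $\Rightarrow$ one implication class'' is true---fortunately the one that (a) needs. Second, and more damagingly, your bridge for ``$P$ prime $\Rightarrow Comp(P)$ prime''---not prime implies a nontrivial modular decomposition, with strong modules of $Comp(P)$ matching strong autonomous sets of $P$---does not work, because a non-prime graph need not have any nontrivial strong module. Already $K_3$, the comparability graph of the three-element chain, has none (its three $2$-element modules pairwise overlap); and for the chain $\omega$, $Comp(P)$ is the countable complete graph, in which every subset is a module and no nontrivial one is strong, so the decomposition detects nothing. Thus your argument breaks exactly on the posets this paper is about; coping with infinite comparability graphs is the substance of Kelly's cited extension, which your plan does not engage.

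The parts of your proposal that are correct: the easy direction (every autonomous set of $P$ is a module of $Comp(P)$), and the derivation of (b) from (a) by complementation, which is sound granted (a): if $\mathcal P=L\cap M$, then $Q:=L\cap M^{-1}$ is a transitive orientation of the complement of $Comp(P)$, modules are invariant under complementation, and $\{L,M\}$ is recovered from $Q$ by $L=\mathcal P\cup Q$ and $M=\mathcal P\cup Q^{-1}$, so uniqueness of the orientation pair gives uniqueness of the realizer. I also point out that the implication you call the main obstacle admits an elementary proof, valid for arbitrary infinite posets, which would repair your plan without any decomposition theory. Let $M$ be a nontrivial module of $Comp(P)$ and let $H$ be its convex hull in $P$, i.e.\ the set of $x$ with $m\le x\le m'$ for some $m,m'\in M$. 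If $w\notin H$ is comparable to some element of $H$, then $w$ is comparable to some element of $M$, hence to all of $M$ ($M$ being a module), hence lies below all of $M$ or above all of $M$ (otherwise $w\in H$), hence below, respectively above, all of $H$; so $H$ is autonomous, and we are done unless $H=V$. If $H=V$, then every $w\notin M$ is comparable to some element of $M$ (being in $H$), hence to all of $M$. Fix $v\notin M$: both $M_{<v}:=\{m\in M: m<v\}$ and $M_{>v}:=\{m\in M: v<m\}$ are nonempty, and $M_{<v}$ lies entirely below $M_{>v}$ by transitivity through $v$. The down-set $B:=\{x\in V: x<m$ for all $m\in M_{>v}\}$ contains $M_{<v}\cup\{v\}$, is disjoint from $M_{>v}$, and is autonomous: if $x\in B$, $w\notin B$ and $x<w$, then either $w\in M_{>v}$, in which case every element of $B$ is below $w$ by the definition of $B$, or $w\notin M$, in which case $w\notin B$ together with comparability of $w$ with all of $M$ yields some $m_0\in M_{>v}$ with $m_0<w$, so again every element of $B$ is below $w$. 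Hence $B$ is a nontrivial autonomous set of $P$. With this, the only genuinely nonelementary content left is (a), and there citing Gallai--Kelly, as the paper does, is the honest move.
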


\begin{theorem}\label{zaguia}
Let $\mathcal L$ and $\mathcal M$ be two linear orders on the same set
$V$. Then the poset $P:=(V, \mathcal L\cap \mathcal M)$ is prime if
and only if the bichain $B:=( V, \mathcal L, \mathcal M)$ is prime.
\end{theorem}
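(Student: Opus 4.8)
The plan is to reduce everything to the comparability graph $Comp(P)$ together with a companion poset obtained by reversing $\mathcal{M}$. Set $Q:=(V,\mathcal{L}\cap\mathcal{M}^{-1})$; since $\mathcal{M}^{-1}$ is again linear, $Q$ is a poset. The starting observation is that for every pair of distinct points $x,y$ exactly one of two things happens: $\mathcal{L}$ and $\mathcal{M}$ order $\{x,y\}$ the same way (so $x,y$ are comparable in $P$ and incomparable in $Q$), or they order it oppositely (so $x,y$ are comparable in $Q$ and incomparable in $P$). Hence $Comp(P)$ and $Comp(Q)$ are complementary graphs on $V$. I would exploit two standard facts: a set is a module of a graph if and only if it is a module of the complementary graph, and, by Theorem~\ref{thm:gallai-kelly}, a poset is prime if and only if its comparability graph is prime. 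Together these give $P$ prime $\iff$ $Comp(P)$ prime $\iff$ $Comp(Q)$ prime $\iff$ $Q$ prime.

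The core step is a bridge between the bichain and the two posets: \emph{a subset $A\subseteq V$ is a common interval of $\mathcal{L}$ and $\mathcal{M}$ (equivalently, autonomous in $B$) if and only if $A$ is autonomous in both $P$ and $Q$.} One implication is immediate: a common interval of $\mathcal{L},\mathcal{M}$ is also a common interval of $\mathcal{L},\mathcal{M}^{-1}$, and being autonomous in a bichain forces being autonomous in the intersection order, which handles both $P$ and $Q$. For the converse I would run a short case analysis on an external point $v\notin A$: autonomy in $P$ puts $v$ below all of $A$, above all of $A$, or incomparable to all of $A$ in $P$, and in the incomparable case autonomy in $Q$ further puts $v$ below all or above all of $A$ in $Q$; in each case one checks that $v$ lies entirely below, or entirely above, $A$ in $\mathcal{L}$, and likewise in $\mathcal{M}$, so $A$ is a common interval. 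With this bridge, the forward direction is clear: if $P$ is prime then so is $Q$, both have only trivial autonomous sets, hence so does their intersection, i.e. $B$ is prime.

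It remains to show that if $P$ is not prime then $B$ is not prime, and this is where the content lies. Since $P$ is not prime, $Comp(P)$ is decomposable, and I would organize the argument around the root of its modular decomposition. The clean engine is a \emph{component lemma}, provable directly: \emph{each connected component $C$ of $Comp(P)$ is a common interval of $\mathcal{L}$ and $\mathcal{M}$}. Indeed, every cross pair between $C$ and its complement is a non-edge, i.e. a disagreement of $\mathcal{L}$ and $\mathcal{M}$; if some external $v$ lay $\mathcal{L}$-between two points of $C$, then along a path inside $C$ one finds adjacent (hence $\mathcal{L},\mathcal{M}$-agreeing) vertices $w_i,w_{i+1}$ with $w_i<_{\mathcal{L}}v<_{\mathcal{L}}w_{i+1}$, and translating the two disagreements through $\mathcal{M}$ yields $w_{i+1}<_{\mathcal{M}}w_i$, contradicting the agreement; so $C$ is $\mathcal{L}$-convex and, symmetrically, $\mathcal{M}$-convex. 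Applying this to $Q$ shows that components of $Comp(Q)$ are common intervals as well. Now I case split: if $Comp(P)$ is disconnected with a nontrivial component, that is a nontrivial common interval; if instead $Comp(Q)$ is disconnected with a nontrivial component, likewise; and the degenerate subcases where $P$ is a chain or an antichain are dispatched by taking any nontrivial convex subset of the relevant chain, which the bridge lemma turns into a common interval.

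The main obstacle is the remaining case, where $Comp(P)$ and its complement are both connected — the prime-quotient case of the modular decomposition. Here the maximal strong modules partition $V$, and since $Comp(P)$ is not prime at least one such module $M$ is nontrivial; the task is to show $M$ is a common interval. The difficulty is genuine: an arbitrary module of $Comp(P)$ need not be autonomous in $P$ (an external point may sit $P$-between two of its elements), and a set autonomous in $P$ need not be a common interval, so $M$ must be shown \emph{unsplittable} in both $\mathcal{L}$ and $\mathcal{M}$. I would resolve this through the substitution (modular) decomposition of posets: the strong modules of $Comp(P)$ coincide with the strong autonomous sets of $P$, hence, via the complement, with those of $Q$, so $M$ is autonomous in both $P$ and $Q$ and the bridge lemma finishes the proof. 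This finer correspondence between strong modules of a comparability graph and strong autonomous sets of the poset — the part of Gallai's theory lying just beyond the stated Theorem~\ref{thm:gallai-kelly} — is the one nontrivial ingredient I would need to invoke or reprove.
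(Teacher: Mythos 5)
The paper itself contains no proof of Theorem~\ref{zaguia}: it cites \cite{za}, where the result is proved for finite sets, and simply asserts that the proof given there holds without the finiteness restriction. So your argument has to stand on its own, and crucially it must work for infinite $V$ --- the only case this paper actually needs. Much of it does: your bridge lemma, your component lemma (whose path argument is correct and nowhere uses finiteness), the easy direction, Cases A and B, and the chain/antichain degeneracies are all sound for arbitrary $V$, and the reduction ``$P$ prime $\Leftrightarrow$ $Q$ prime'' via complementary comparability graphs is legitimate because the paper's Theorem~\ref{thm:gallai-kelly} is stated for infinite posets.

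The genuine gap is Case C. There you assert that the maximal strong modules of $Comp(P)$ partition $V$ and that one of them is nontrivial; this is Gallai's prime-quotient theorem for \emph{finite} graphs, and it is false for infinite ones. Concretely, let $V=\{v_1,v_2,v_3,\dots\}$, let $\mathcal L$ be $v_1<v_2<v_3<\cdots$, and let $\mathcal M$ place all odd-indexed vertices first, in decreasing order of index, followed by all even-indexed vertices in increasing order of index (so $\mathcal M$ has type $\omega^*+\omega$). Then $P:=(V,\mathcal L\cap\mathcal M)$ satisfies $v_i<_P v_j$ iff $i<j$ and $j$ is even; both $Comp(P)$ and its complement are connected; $P$ is not prime; and one checks that the nontrivial modules of $Comp(P)$ are exactly the initial segments $\{v_1,\dots,v_n\}$ with $n\geq 2$. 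These form a strictly increasing chain of (pairwise non-overlapping, hence strong) modules whose union is $V$, so there is no maximal proper strong module at all: the partition you want to quotient by, and the prime quotient itself, simply do not exist. (The theorem is true in this example --- each $\{v_1,\dots,v_n\}$ is a common interval of $\mathcal L$ and $\mathcal M$ --- but your argument never produces it.) The second ingredient you flag, that strong modules of $Comp(P)$ coincide with strong autonomous sets of $P$, is likewise finite Gallai theory whose infinite validity you would have to establish; but the nonexistence of the decomposition is the more basic failure. A repair staying inside your own toolkit: for a nontrivial autonomous set $A$ of $P$, show that the $\mathcal L$-convex hull of $A$ equals its $\mathcal M$-convex hull (any $w\notin A$ lying strictly $\mathcal L$-between two points of $A$ must, by autonomy, be $P$-incomparable to all of $A$, hence lies $\mathcal M$-between points of $A$ as well), so this hull is a common interval containing $A$; if it is proper you are done, and if it equals $V$ then no vertex outside $A$ is $P$-comparable to any vertex of $A$, so $Comp(P)$ is disconnected and your Case A / antichain analysis finishes. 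This eliminates Case C, and with it every appeal to modular decomposition.
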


Theorem \ref{thm:gallai-kelly} was obtained for finite posets in
\cite{gallai}, and extended to the infinite in \cite
{Ke}\label{thm:kelly}. Theorem \ref{zaguia} was stated in \cite{za}
for a finite sets; the proof given holds without that restriction.

A characterization of pairs of infinite orthogonal linear orders is
easy to state but, contrary to the finite case, it says nothing
about existence. Indeed:

\begin{theorem}\label{inf3}
Let $\mathcal L$ and $\mathcal M$ be two linear orders on the same set $V$. The following properties are equivalent.
\begin{enumerate}[(i)]
\item $\mathcal L$ and $\mathcal M$ are orthogonal.
\item The bichain $B:=(V,Ê\mathcal L,Ê\mathcal M)$ is prime and embedding rigid.
\item The poset $P:=(V,\mathcal L\cap \mathcal M)$ is prime and  has at most two embeddings: the identity map and some embedding of order $2$.
\end{enumerate}

\end{theorem}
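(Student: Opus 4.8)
The plan is to route both equivalences through primality, which is already handled by Theorem \ref{zaguia} ($B$ is prime if and only if $P$ is prime), and then to analyze endomorphisms and self-embeddings directly, the only external input being the rigidity of transitive orientations from Theorem \ref{thm:gallai-kelly}. Throughout, orthogonality of $\mathcal L,\mathcal M$ is by definition semirigidity of $B$.

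For (i) $\Leftrightarrow$ (ii), Remark \ref{rem:semi-prime} already yields (i) $\Rightarrow$ (ii). For the converse I would start from the observation that, for any endomorphism $f$ of $B$, each fibre $f^{-1}(t)$ is a common interval of $\mathcal L$ and $\mathcal M$: if $x<_{\mathcal L}z<_{\mathcal L}y$ and $f(x)=f(y)$, then $f(x)\,\mathcal L\,f(z)\,\mathcal L\,f(y)=f(x)$ forces $f(z)=f(x)$ by antisymmetry, and symmetrically for $\mathcal M$. Hence every fibre is autonomous in $B$, so primality forces each nonconstant endomorphism to be injective. An injective endomorphism automatically reflects each linear order (if $f(x)\,\mathcal L\,f(y)$ while $y<_{\mathcal L}x$, then $f(y)\,\mathcal L\,f(x)$ and antisymmetry contradict injectivity), hence is a self-embedding of $B$; embedding rigidity then collapses it to the identity, giving (ii) $\Rightarrow$ (i).

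For (ii) $\Leftrightarrow$ (iii), assume $P$ (equivalently $B$) prime; the case $\mathcal L=\mathcal M$ makes $P$ a prime chain, forcing $|V|\le 2$, so I may assume $\mathcal L\neq\mathcal M$. I would introduce the conjugate order $\mathcal N:=\mathcal L\cap\mathcal M^{-1}$, whose strict part is exactly the set of $P$-incomparable pairs oriented by $\mathcal L$, so that $\mathcal L=\mathcal P\cup\mathcal N$ and $\mathcal M=\mathcal P\cup\mathcal N^{-1}$ with $\mathcal P=\mathcal L\cap\mathcal M$. A short verification then shows that a self-embedding $f$ of $P$ lies in $\mathrm{Emb}(B)$ precisely when $f$ preserves $\mathcal N$ (that is, $x\,\mathcal N\,y\Leftrightarrow f(x)\,\mathcal N\,f(y)$). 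The structural heart of the argument is a dichotomy: every self-embedding $f$ of $P$ either preserves or reverses $\mathcal N$. I would prove this by passing to the image: $f$ is an isomorphism of $P$ onto $P_{\restriction f(V)}$, so the pushforward $f_*(\mathcal N)$ and the restriction $\mathcal N_{\restriction f(V)}$ are two transitive orientations of $Comp(Q)$, where $Q:=(f(V),\mathcal N_{\restriction f(V)})$; since $Comp(Q)$ is the complement of the prime graph $Comp(P_{\restriction f(V)})$ it is itself prime, so by Theorem \ref{thm:gallai-kelly}(a) its only transitive orientations are $\mathcal N_{\restriction f(V)}$ and its reverse, whence $f$ preserves or reverses $\mathcal N$.

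This dichotomy equips $\mathrm{Emb}(P)$ with a sign in $\{+,-\}$ that multiplies under composition, whose positive part is precisely $\mathrm{Emb}(B)$, and a negative $f$ carries the $\mathcal L$-order to the $\mathcal M$-order, so $f\circ f$ is positive. If $B$ is embedding rigid then $\mathrm{Emb}(B)=\{\mathrm{id}\}$ is the positive part; hence a negative $f$ satisfies $f\circ f=\mathrm{id}$ and any two negative maps $f,g$ satisfy $f\circ g=\mathrm{id}$, so $g=f$, giving $\mathrm{Emb}(P)\subseteq\{\mathrm{id},f\}$ with $f$ of order $2$, which is (iii). Conversely, suppose $\mathrm{Emb}(P)\subseteq\{\mathrm{id},g\}$ with $g\circ g=\mathrm{id}$. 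A nonidentity positive $g$ would be an order-preserving involution of the chain $(V,\mathcal L)$, and such an involution is necessarily the identity (if $x<_{\mathcal L}g(x)$ then $g(x)\le_{\mathcal L}g(g(x))=x$, a contradiction, and symmetrically); hence $g$, if present, is negative, so the positive part $\mathrm{Emb}(B)$ equals $\{\mathrm{id}\}$ and $B$ is embedding rigid. The main obstacle I anticipate is exactly the dichotomy for non-surjective embeddings — arranging for the Gallai--Kelly uniqueness to apply on the image $f(V)$ rather than on all of $V$ — together with this last observation, which is what prevents a spurious order-preserving involution from masquerading as the order $2$ embedding.
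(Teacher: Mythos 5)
Your proposal is correct, but it reaches the key equivalence (ii) $\Leftrightarrow$ (iii) by a genuinely different mechanism than the paper. The paper proves the cycle (i) $\Rightarrow$ (ii) $\Rightarrow$ (iii) $\Rightarrow$ (i): for (ii) $\Rightarrow$ (iii) it pulls the two linear orders \emph{back} along a self-embedding $f$ of $P$, setting $\mathcal L_f:=\{(x,y):(f(x),f(y))\in\mathcal L\}$ and $\mathcal M_f$ likewise, observes that $\mathcal L_f\cap\mathcal M_f=\mathcal P$, and invokes Theorem \ref{thm:gallai-kelly}(b) --- uniqueness of the \emph{pair} of linear orders realizing a prime two-dimensional order --- to get $\{\mathcal L,\mathcal M\}=\{\mathcal L_f,\mathcal M_f\}$, from which the two cases (preserve or swap) and the order-$2$ conclusion follow; its (iii) $\Rightarrow$ (i) is exactly your fibre-plus-injectivity argument, which you instead deploy to prove (ii) $\Rightarrow$ (i) directly. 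By contrast, you push the conjugate order $\mathcal N=\mathcal L\cap\mathcal M^{-1}$ \emph{forward} onto the image $f(V)$ and apply Theorem \ref{thm:gallai-kelly}(a) --- uniqueness of transitive orientations --- to the complement of $Comp(P_{\restriction f(V)})$, obtaining the preserve-or-reverse dichotomy and hence a multiplicative sign on the embedding monoid of $P$. Your route uses only part (a) of Gallai--Kelly and makes the group-theoretic bookkeeping (positive part $=\mathrm{Emb}(B)$, at most one negative element, necessarily an involution) very transparent, whereas the paper's route via part (b) is shorter, stays on the whole set $V$, and avoids two auxiliary facts you rely on: that a graph and its complement have the same autonomous sets (standard, but not stated in the paper) and that $f_*(\mathcal N)$ is indeed a transitive orientation of $Comp(Q)$ (which needs the fact that $f$ reflects $\mathcal P$, hence preserves $\mathcal P$-incomparability). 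Both hinge on the same rigidity phenomenon, and both need the final chain-involution observation (your argument that an order-preserving involution of a linear order is the identity is the paper's infinite-orbit argument in disguise). Two small points to tidy in a full write-up: check the degenerate case $\mathcal L=\mathcal M$ (so $|V|\le 2$) explicitly rather than just setting it aside, and note that $\mathrm{Emb}(B)\subseteq\mathrm{Emb}(P)$ so that hypothesis (iii) really does constrain the embeddings of $B$.
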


\begin{proof} $(i)\Rightarrow (ii)$. This  follows from Remark \ref{rem:semi-prime}.\\
$(ii)\Rightarrow (iii)$. Since $B$ is prime, $P$ is prime by Theorem
\ref{zaguia}. Let $f$ be an embedding of $P$. Let $\mathcal
P:=\mathcal L\cap \mathcal M$, $\mathcal L_{f}:=\{(x,y) :
(f(x),f(y))\in \mathcal L\}$ and let $\mathcal M_{f}$ be defined
similarly.  Then $f$ is an embedding of $(V,\mathcal L_{f})$ into
$(V,\mathcal L)$ and an embedding of $(V,\mathcal M_{f})$ into
$(V,\mathcal M)$. Hence, $\mathcal P=\mathcal L_{f}\cap \mathcal
M_{f}$.  According to (b) of Theorem \ref{thm:gallai-kelly}, $\{\mathcal L,
\mathcal M\}=\{\mathcal L_{f}, \mathcal M_{f}\}$.\\
\noindent {\bf Case 1.}
$\mathcal L=\mathcal L_{f}$ and $\mathcal M= \mathcal M_{f}$. In this
case, since $f$ is an embedding of $(V,\mathcal L_{f})$ into
$(V,\mathcal L)$, it preserves $\mathcal L$; for the same reason, it
preserves $\mathcal M$. Thus $f$ is the identity.\\
\noindent{\bf Case 2.}
$\mathcal L=\mathcal M_{f}$ and $\mathcal M= \mathcal L_{f}$. In this
case, $f \circ f$ preserves necessarily $\mathcal L$ and $\mathcal M$
and therefore is an embedding of $B$. Thus $f \circ f$ is the
identity. \\

We check now that if $f$ is not the identity, this is the only
embedding of $P$ distinct from the identity. Indeed, let $f'$ be an
embedding of $P$ then $f'\circ f$ is an embedding too. If $f'$ is
distinct from the identity then $\mathcal L=\mathcal M_{f'}$ and
$\mathcal M= \mathcal L_{f'}$. It follows that $f'\circ f$ preserves
$\mathcal L$ and $\mathcal M$. Thus $f'\circ f$ is the identity,
amounting to $f'=f$.\\

\noindent $(iii)\Rightarrow (i)$.
Let $f$ be an endomorphism of $B$. If $f$ is not one-to-one, then the
inverse image of some element under $f$ is an interval for both chains
$(V, \mathcal L)$ and $(V, \mathcal M)$, hence it is an autonomous
subset in $B$. Since $P$ is prime, $B$ must be prime (the easy part of
Theorem \ref{zaguia}), hence the autonomous set is $V$ and therefore
$f$ is a constant map. If $f$ is one-to-one this is an embedding of
$B$, hence an embedding of $P$. If $f$ is not the identity, it must
have order $2$. But, since $f$ is an endomorphism of $B$, it preserves
$\mathcal L$ and since $\mathcal L$ is a linear order, the orbit of an
element $x$ not fixed by $f$ must be infinite. Hence, $f$ is the
identity.  \end{proof}

\section{Proof of Theorem \ref{omega}}\label {section: omega}

Let $A$ be a subset of $\NN$. Set $\hat A:=A\times \{1\}$ and
$\NN(A):=\NN\cup \hat A$. Let $G(A):=(\NN (A), E(A))$ be the graph
(undirected, with no loop) whose vertex set is $\NN(A)$ and edge set
$E(A):=\{\{n,n+1\} : n\in \NN\}\cup \{\{n,(n,1)\} : n\in A\}$.  For
instance, $G({\emptyset})$ is the infinite one way path; the graphs
$G({\{2\}})$ and $G({\NN})$ are depicted in Figure \ref{figure:1}.

Let $k\in \ZZ$, let  $t_k: \NN \longrightarrow \ZZ$ be the map defined by $t_k(n)=n+k$. We call \emph{translation} any map of that form.
\begin{lemma}\label{pz5} If  $1\not \in A$ then $G(A)$ is prime. If $A$ is not eventually periodic, $G(A)$ is embedding rigid.
\end{lemma}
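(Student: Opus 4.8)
The plan is to treat the two assertions separately, in both cases exploiting that $G(A)$ is a caterpillar: an infinite ray $0-1-2-\cdots$ carrying at most one pendant leaf $(n,1)$ at each spine vertex $n\in A$.

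\emph{Primality.} Assume $1\notin A$ and suppose toward a contradiction that $S$ is a nontrivial autonomous set, so that every vertex outside $S$ is adjacent either to all or to none of $S$. First I would show that $S$ contains no leaf. If a leaf $(n,1)$ and its support $n$ both lie in $S$, then since $n$ is the only neighbour of $(n,1)$, the vertex $(n,1)$ has no neighbour outside $S$; by the autonomy condition no outside vertex meets $S$ at all, so $S$ is a union of connected components, forcing $S=\NN(A)$. If instead $(n,1)\in S$ but $n\notin S$, then $n$ must be adjacent to every element of $S$, so $S\subseteq\{n-1,n+1,(n,1)\}$; but a second spine-neighbour of $n-1$ or of $n+1$ (which exists because the ray is infinite and $n\neq1$, the latter guaranteed by $1\notin A$) is adjacent to that element and not to $(n,1)$, a contradiction, and the boundary subcase $n=0$ is dispatched by vertex $2$. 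Hence $S\subseteq\NN$. A module contained in the spine is then handled by the usual path argument: with $m=\min S$, the neighbour to the left of $m$ (or, when $m=0$, vertex $1$, whose only spine-neighbours are $0$ and $2$ precisely because $1\notin A$) forces $S$ to propagate to all of $\NN$; but $\NN$ fails to be autonomous as soon as $A\neq\emptyset$, since a leaf $(n,1)$ is adjacent to $n$ and to no other spine vertex, and if $A=\emptyset$ then $\NN$ is the whole vertex set. This contradiction proves primality.

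\emph{Rigidity: reduction to a shift.} Let $f$ be a self-embedding of $G(A)$. Since $f$ is an injective homomorphism, $\deg(f(x))\ge\deg(x)$ for every $x$, while the maximum degree in $G(A)$ is $3$. For $n\ge1$ we have $\deg(n)\ge2$, so $f(n)$ is not a leaf, i.e. $f(n)\in\NN$; and as the only edges between two spine vertices join consecutive ones, $f(n+1)=f(n)\pm1$. Injectivity forbids a downward step (one such step forces an infinite descent into the negatives), so $f(n)=n+c$ for all $n\ge1$ and a fixed constant $c$, and the extensivity of one-to-one order preserving maps on well ordered chains gives $c\ge0$.

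\emph{Rigidity: periodicity and conclusion.} The crux is to show $c=0$. For $n\ge1$ with $n\in A$ we have $\deg(n)=3$, hence $\deg(n+c)\ge3$, hence $n+c\in A$; thus $(A\cap\{n\ge1\})+c\subseteq A$. If $c\ge1$ this alone forces $A$ to be eventually periodic: within each residue class modulo $c$ the implication $n\in A\Rightarrow n+c\in A$ makes $A$ a final segment of that class, so beyond the largest of these finitely many thresholds membership in $A$ depends only on the residue modulo $c$. This contradicts the hypothesis that $A$ is not eventually periodic, so $c=0$ and $f$ fixes every spine vertex $n\ge1$. A leaf $(n,1)$ (necessarily with $n\ge2$, since $1\notin A$) is then fixed too, being the neighbour of $n=f(n)$ distinct from $f(n-1)=n-1$ and $f(n+1)=n+1$; and $f(0)$, a neighbour of $f(1)=1$ distinct from $f(2)=2$, equals $0$ because $1\notin A$ leaves $1$ with only the neighbours $0$ and $2$. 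The leaf $(0,1)$, if present, is fixed by the same reasoning. Hence $f$ is the identity. The main obstacle is this periodicity step: one must observe that the purely one-sided containment $(A\cap\{n\ge1\})+c\subseteq A$, which follows from the homomorphism (degree) condition alone, already yields eventual periodicity, so that no appeal to the surjectivity of $f$ is required.
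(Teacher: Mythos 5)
Your proof takes essentially the same route as the paper's: primality via a case analysis on how a nontrivial autonomous set meets the leaves and the spine (reducing to primality of the infinite path), and rigidity via the degree argument forcing a self-embedding to act on the spine off the origin as a translation $t_c$, followed by the observation that $t_c$ maps the degree-three set $A\cap\{1,2,\dots\}$ into $A$, so that $c\geq 1$ would make $A$ eventually periodic. You are in fact more complete than the paper in two respects: you prove the eventual-periodicity fact instead of citing it as well known, and you carry out the endgame (once $c=0$, the leaves and the vertex $0$ are fixed), which the paper leaves implicit. Your use of $1\notin A$ in that endgame is not a blemish but a necessity: if $1\in A$ and $0\notin A$, the transposition of the two degree-one vertices $0$ and $(1,1)$ is a non-identity automorphism of $G(A)$ no matter what $A$ is, so the rigidity assertion must indeed be read with the standing hypothesis $1\notin A$, as in all the paper's applications.

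The one genuine flaw is your justification of $c\geq 0$. The quoted fact --- ``a one-to-one order preserving map on a well ordered chain is extensive'' --- concerns maps from a well ordered chain \emph{into itself}, whereas your map is defined on the tail $\{1,2,\dots\}$ and takes values in the strictly larger chain $\NN$. In that setting extensivity can fail, and the failure is precisely the map $n\mapsto n-1$, i.e.\ the case $c=-1$ that you need to exclude; so as written the step begs the question. Two quick repairs: (a) use the vertex $0$: if $c=-1$, then $f(0)$ must be a neighbour of $f(1)=0$ distinct from $f(2)=1$, which forces $0\in A$ and $f(0)=(0,1)$, contradicting $\deg(f(0))\geq\deg(0)$; or (b) bypass the sign of $c$ altogether: if $c=-1$, your degree argument shows that $n\mapsto n-1$ maps the set $A\cap\{1,2,\dots\}$ of degree-three vertices into itself, which by infinite descent forces that set to be empty, so $A\subseteq\{0\}$ is eventually periodic --- the same contradiction you reach for $c\geq 1$. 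With either patch your argument is complete and correct. (The paper itself simply asserts $k\in\NN$ for its translation without comment, so you have at least made the delicate point visible.)
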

\begin{proof}Suppose that   $1\not \in A$. Let $X$ be a nonempty autonomous set in $G(A)$ with more than one element. Then $X\cap \NN$ is  autonomous in $G(A)_{\restriction \NN}$. Since a path with at least four vertices is prime, $X\cap \NN$ must be  a trivial subset of $\NN$. The case $X\cap \NN=\{n\}$ is impossible. Indeed, since $X$ has at least two elements, it contains an element of  the form $(m,1)$. If $m\leq n$ or $n+2\leq m$, then $\{n+1, (m,1)\}$ is not an edge of $G(A)$, whereas $\{n,n+1\}$ is an edge. Since $X$ is autonomous, $n+1\in X$, which gives a contradiction. If $m=n+1$, then, since $1\not \in A$, $m\geq 2$ and thus $n-1$ is defined; since   $\{n-1, (m,1)\}$ is not an edge, whereas $\{n-1,n\}$ is an edge and $X$ is autonomous, $n-1\in X$. This  gives a contradiction. The case  $X\cap \NN=\emptyset$ is also impossible. Indeed, $X$  contains two elements of the  form $(m,1), (m',1)$. Since $\{m, (m+1)\}$ is an edge, whereas $\{m, (m'+1)\}$ is not, $m\in X$. It follows that $\NN\subseteq X$.  Let $m\in A$; since $\{m,(m,1)\}$ is an edge and $\{m+1, (m,1)\}$ is not, $(m,1)\in X$. Hence $X=N(A)$.   Thus $G(A)$ is prime.  Now,
Let $f$ be an embedding of $G(A)$ into $G(A)$.
This embedding maps vertices of degree $2$ to vertices of degree $2$ or $3$. Since those vertices belong to the infinite path $G(A)_{\restriction {\NN}}$, the map $f$ induces an embedding of $G(A)_{\restriction \NN\setminus \{0\}}$ into $G(A)_{\restriction {\NN}}$. This embedding is  a translation  $t_k$ for some  $k\in \NN$. The  elements of $A\setminus \{0\}$ are  the only elements of  degree $3$, from this we have $t_k(A)\subseteq A$. As is well known, if a translation sends a subset of $\NN$ into itself, this subset is eventually periodic.
\end{proof}

\begin{figure}[h]
\begin{center}
\leavevmode \epsfxsize=2in \epsfbox{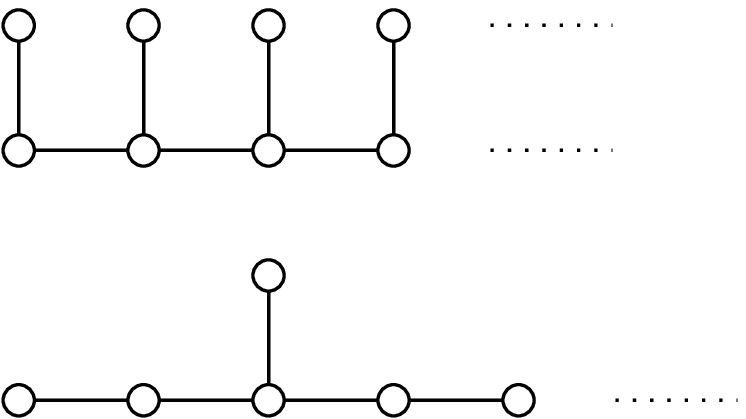}
\end{center}
\caption{} \label{figure:1}
\end{figure}

Let $A$ be  a subset of $ \NN$ and $P(A)$ be the poset whose vertex set is $\NN\cup \hat{A}$, the strict order being the union of the following two sets:

$$\{(2n,2n-1) : n>0\}\cup \{(2n,2n+1):n\geq 0)\}$$

and $$\{(2n,(2n,1)) : 2n\in A\}\cup \{((2n+1, 1), 2n+1):2n+1\in A\}.$$
\begin{lemma}\label{lem:PA}
The comparability graph of $P(A)$ is $G(A)$. The order on $P(A)$ is the intersection of a linear order $L(A)$ of order type $\omega$ and a linear order $L'(A)$ of order type $\omega^*$. If $1\not \in A$ then $P(A)$ is prime; if moreover $A$ is not eventually periodic, then $L(A)$ and $L'(A)$ are orthogonal.
\end{lemma}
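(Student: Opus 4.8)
I would organize the argument around the four assertions, establishing them in the order: comparability graph, the two-linear-order decomposition, primality, orthogonality. The decomposition is where the real work lies; everything else follows from the results already in hand.

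First I would read off that $Comp(P(A))=G(A)$ directly from the definition. The poset $P(A)$ has height one: the even integers together with the pendants $(2n+1,1)$ (for $2n+1\in A$) are minimal, while the odd integers together with the pendants $(2n,1)$ (for $2n\in A$) are maximal, and every defining pair joins a minimal element to a maximal one. Hence the order of $P(A)$ is just the reflexive closure of the listed pairs, transitivity creating nothing new, and the comparable pairs are exactly the consecutive pairs $\{k,k+1\}$ (coming from the fence $0<_P1>_P2<_P3>_P\cdots$) together with $\{n,(n,1)\}$ for $n\in A$. These are precisely the edges of $G(A)$.

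Second, I would exhibit $L(A)$ and $L'(A)$. I would first realize the fence on $\NN$ as the intersection of the type-$\omega$ order $0<_L2<_L1<_L4<_L3<_L6<_L5<_L\cdots$ and the type-$\omega^*$ order whose descending enumeration from the top is $1,0,3,2,5,4,\ldots$; a short check shows their intersection is exactly the fence. Encoding each vertex $x$ by the point $(\ell(x),r'(x))$, where $\ell(x)$ is its rank from the bottom in $L(A)$ and $r'(x)$ its rank from the top in $L'(A)$, the relation $u<_Pv$ reads ``$\ell(u)<\ell(v)$ and $r'(u)>r'(v)$''. It then remains to insert the pendants. I would place each even pendant $(2n,1)$ just after its anchor $2n$ in $L(A)$ and just above it in $L'(A)$, but in gaps chosen so that in the plane it lands up-and-to-the-left of $2n+1$ and down-and-to-the-right of $2n-1$; dually for each odd pendant below its anchor. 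With this choice the only vertex comparable to a pendant is its anchor, so the pendant is incomparable to everything else in $L(A)\cap L'(A)$, and since only finitely many pendants are inserted before any given vertex in $L(A)$ (resp. after any given vertex in $L'(A)$) the two orders keep types $\omega$ and $\omega^*$, with $L(A)\cap L'(A)=P(A)$. The main obstacle is exactly this placement: the naive choice ``immediately above the anchor'' fails, because an even pendant then also falls below both odd neighbours of its anchor, so the gaps must be selected with care.

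Third, granting the decomposition, the last two claims follow from the cited theory. If $1\notin A$, then $G(A)$ is prime by Lemma \ref{pz5}, and since $Comp(P(A))=G(A)$, Theorem \ref{thm:gallai-kelly} gives that $P(A)$ is prime. For orthogonality, assuming moreover that $A$ is not eventually periodic, I would invoke Theorem \ref{inf3}: it suffices that $P(A)$ is prime (just shown) and has at most two embeddings. Any embedding $f$ of the poset $P(A)$ into itself is one-to-one and preserves and reflects $\le$, hence preserves and reflects comparability, so it is an embedding of $Comp(P(A))=G(A)$ into itself. Since $A$ is not eventually periodic, Lemma \ref{pz5} says $G(A)$ is embedding rigid, so this graph embedding is the identity; thus $f$ fixes every vertex and is the identity. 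Therefore $P(A)$ is embedding rigid, a fortiori has at most two embeddings, and Theorem \ref{inf3} yields that $L(A)$ and $L'(A)$ are orthogonal.
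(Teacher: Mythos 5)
Your proposal is correct and takes essentially the same route as the paper: the paper likewise realizes $P(A)$ inside the product $\NN\times\NN$ with the first coordinate ordered naturally and the second reversed (appealing to Figure \ref{fig:2} where you give explicit coordinates and ranks), takes the two induced linear orders of types $\omega$ and $\omega^*$ whose intersection is the order of $P(A)$, and then obtains primality, embedding rigidity and orthogonality from Lemma \ref{pz5} together with Theorem \ref{thm:gallai-kelly} and Theorem \ref{inf3}, exactly as you do. Your write-up is actually more explicit than the paper's figure-based argument; the only points to tighten are that your pendant constraints should also force incomparability with $2n\pm 2$ (being up-and-to-the-left of $2n+1$ and down-and-to-the-right of $2n-1$ alone still permits placements comparable to $2n+2$ or $2n-2$), and that when $2n$ and $2n+1$ both lie in $A$ the even pendant must precede the odd one in $L(A)$ --- both of which hold for the natural choice of first admissible gaps.
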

\begin{proof}
 A representation of $P(A)$ into the  cartesian  product $\NN\times\NN$ is implicitly depicted in Figure \ref{fig:2}. The first component of the cartesian product is ordered with the natural order, the second is ordered by its reverse. The two lexicographical orders on the product yield two linear extensions of $P(A)$  of order type $\omega$ and $\omega^*$ respectively. Next, apply Lemma \ref{pz5}: If $1\not \in A$, then $G(A)$ is prime but then  $P(A)$ is prime. If $A$ is not eventually periodic,  then $G(A)$ is embedding rigid, but then,  trivially, $P(A)$ is embedding rigid. If both conditions hold, then $L(A)$ and $L'(A)$ are orthogonal by Theorem \ref{inf3}.
\end{proof}

\begin{figure}[h]
\begin{center}
\leavevmode \epsfxsize=2in \epsfbox{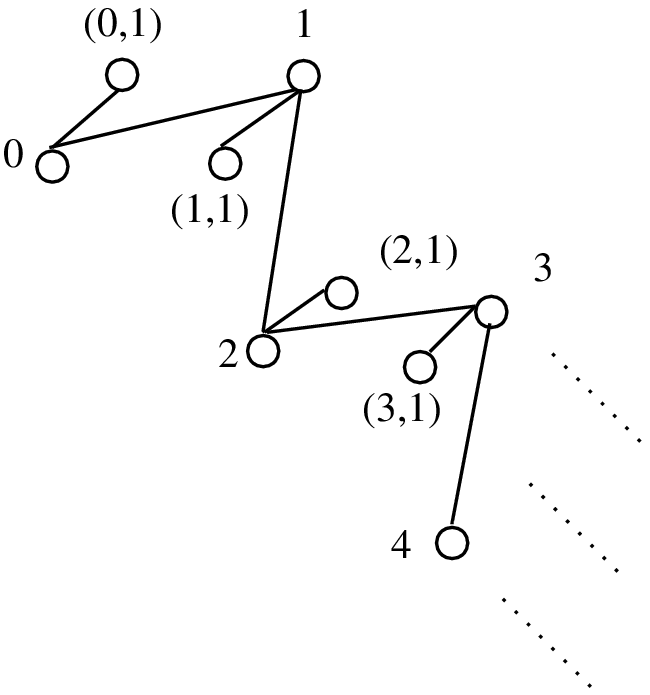}
\end{center}
\caption{} \label{fig:2}
\end{figure}

Let $A,A'$ be two subsets of a set $E$; we recall that $A$ is \emph{almost included} into $A'$, denoted $A \subseteq_{fin} A'$, if the set $A\setminus A'$ is finite.

The second part of Lemma \ref{pz5} extends as follows:

\begin{lemma}\label{pz1} Let $A, A'\in 2^{\NN}$. The following properties are equivalent:
\begin{enumerate}[(i)]
\item $G(A)$ is almost embeddable into $G(A')$.
\item There is some translation $t_k$ such that $t_k(A)$ is almost included in $A'$.
\end{enumerate}
\end{lemma}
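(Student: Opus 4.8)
The plan is to prove the two implications separately. The implication $(ii)\Rightarrow(i)$ is a direct construction of an embedding from the translation, while $(i)\Rightarrow(ii)$ requires extracting a translation from an arbitrary partial embedding, exactly in the spirit of the second part of Lemma \ref{pz5}.

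First I would prove $(ii)\Rightarrow(i)$. Suppose $t_k(A)\subseteq_{fin} A'$ and define $g$ by $g(n):=n+k$ on spine vertices $n\in\NN$ and $g((n,1)):=(n+k,1)$ on teeth. This is defined on a cofinite subset of $\NN(A)$ once we discard a suitable finite set $F$: if $k<0$ we remove the spine vertices $0,\dots,-k-1$ together with their teeth, and in all cases we remove the teeth $(a,1)$ with $a\in A$ and $a+k\notin A'$, which are finitely many precisely because $t_k(A)\subseteq_{fin}A'$. On the surviving set $g$ is injective and sends the spine to the spine and teeth to teeth; a routine inspection of the cases spine--spine, spine--tooth and tooth--tooth shows that it preserves both edges and non-edges (the spine--tooth case is where the condition $a+k\in A'$ is used), so $g$ is an embedding of $G(A)_{\restriction \NN(A)\setminus F}$ into $G(A')$.

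For $(i)\Rightarrow(ii)$, fix a finite $F\subseteq\NN(A)$ and an embedding $g\colon G(A)_{\restriction \NN(A)\setminus F}\hookrightarrow G(A')$, and choose $N$ so large that $\{N,N+1,\dots\}\cap F=\emptyset$, so the tail $R=(N,N+1,\dots)$ of the spine survives. As in Lemma \ref{pz5}, the key is to show $g$ agrees with a translation on a tail of $R$. Every vertex $n>N$ has the two neighbours $n-1,n+1$ inside $R$, so $g(n)$ has degree at least $2$ in $G(A')$; since teeth are the only degree-$1$ vertices, each such $g(n)$ is a \emph{spine} vertex of $G(A')$. Thus $g(N+1),g(N+2),\dots$ is a sequence of distinct elements of $\NN$ whose consecutive terms are adjacent on the spine, i.e. differ by exactly $1$; this is a self-avoiding walk on $\ZZ$ with unit steps, hence monotone, and being infinite and bounded below it is increasing. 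Consequently there is $k\in\ZZ$ with $g(n)=n+k$ for all $n\ge N+1$. To read off the teeth, take $a\in A$ large enough that $a-1,a,a+1\ge N+1$ and none of $a,(a,1),a-1,a+1$ lies in $F$; all but finitely many $a\in A$ qualify. The tooth $(a,1)$ is adjacent only to $a$, so $g((a,1))$ is adjacent to $g(a)=a+k$ and to no other image vertex. The neighbours of the spine vertex $a+k$ in $G(A')$ are $a+k-1$, $a+k+1$, and, when $a+k\in A'$, the tooth $(a+k,1)$; but $g(a-1)=a+k-1$ and $g(a+1)=a+k+1$ are already used, so injectivity forces $g((a,1))=(a+k,1)$, which requires $a+k\in A'$. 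Hence $t_k(a)\in A'$ for all but finitely many $a\in A$, that is $t_k(A)\subseteq_{fin}A'$.

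The main obstacle is the middle step of the second implication: showing that an a priori arbitrary partial embedding $g$ must coincide with a single translation $t_k$ on a tail of the spine. Once the degree argument confines the image of the spine tail to the spine of $G(A')$, the self-avoiding-walk monotonicity pins $g$ down to $t_k$, and the behaviour on teeth is then forced by injectivity; the remaining verifications are routine.
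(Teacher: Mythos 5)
Your proof is correct and follows essentially the same route as the paper's: the same translated-copy construction for $(ii)\Rightarrow(i)$, and for $(i)\Rightarrow(ii)$ the same degree argument confining the image of a spine tail to the spine of $G(A')$, identification of the induced map as a translation $t_k$, and then reading off $t_k(A)\subseteq_{fin}A'$ from the teeth. The extra details you supply (monotonicity of the self-avoiding unit-step walk, injectivity forcing $g((a,1))=(a+k,1)$) merely flesh out steps the paper leaves implicit.
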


\begin{proof}$(ii)\Rightarrow (i)$.
Let $t_k$  be a translation such that $t_k(A)$ is almost contained in $A'$. An integer $n\in \NN$ is \emph{bad} if either $t_k(n)\not \in \NN$ or $t_k(n)\not \in A'$.
 The set of bad integers is finite, hence the set $F$ of integers dominated by some bad  integer is finite too.
Let $X:=(\NN\cup \hat{A})\setminus (F\cup \hat {F}) $ and $\overline{t_k}: X \longrightarrow \NN \cup \hat{A'}$ defined by
$\overline{t_k}(n)=t_k(n)$ if $n\in \NN\setminus F$ and $\overline{t_k}(n,1)=(t_k(n),1)$ if $n\in A\setminus F$. This defines an embedding from $G({A})_{\restriction X}$ into $G({A'})$.\\
$(i)\Rightarrow (ii)$.
Let $f$ be an embedding of a restriction of $G(A)$ to a cofinite set of $\NN (A)$ into $G_{A'}$.
This embedding maps vertices of degree $2$ to vertices of degree $2$ or $3$. Since these vertices belong to the infinite path $G(A')_{\restriction {\NN}}$, the map $f$ induces an embedding of $G(A)_{\restriction Y}$, where $Y$ is an    infinite final segment of $\NN$,  into $G(A')_{\restriction {\NN}}$. Thus,
such a map is the restriction of some $t_k$ where $k\in \ZZ$. Since the  elements of $A$ and $A'$ (except at most one) are  the only elements of  degree $3$, $f$  maps almost all elements of $A$ into $ A'$, hence $t_k(A)$ is almost included in $A'$.
\end{proof}

\begin{lemma}\label{pz2} There is a family $\mathcal A$ of $2^{\aleph_0}$ subsets of $\NN$ such that for every pair $A, A'$ of distinct elements of $\mathcal A$, no translate of $A$ is almost included in $A'$.
\end{lemma}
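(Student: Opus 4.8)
The plan is to attach to each candidate set a pair of real-valued invariants, its lower and upper asymptotic densities, to observe that this pair is monotone under the relation ``some translate is almost included in'', and then to realize a continuum-sized \emph{antichain} of such pairs. For $A\subseteq \NN$ write
\[
\underline{d}(A):=\liminf_{N\to\infty}\frac{|A\cap[0,N]|}{N+1},\qquad
\overline{d}(A):=\limsup_{N\to\infty}\frac{|A\cap[0,N]|}{N+1}
\]
for the lower and upper densities of $A$. These two numbers will do all the work.

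\emph{Step 1 (monotonicity).} First I would record that if $t_k(A)\subseteq_{fin}A'$ for some $k\in\ZZ$, then $\underline{d}(A)\le \underline{d}(A')$ and $\overline{d}(A)\le \overline{d}(A')$. Indeed, $t_k(A)=A+k$ apart from the finitely many elements sent below $0$, and almost inclusion means $(A+k)\setminus A'$ is finite; hence for all large $N$ one has $|A'\cap[0,N]|\ge |(A+k)\cap[0,N]|-O(1)=|A\cap[0,N-k]|-O(1)$. Shifting the counting window by the fixed amount $k$ changes neither the $\liminf$ nor the $\limsup$ of the normalized counting function $|A\cap[0,M]|/(M+1)$, so taking $\liminf$ and $\limsup$ in $N$ yields the two asserted inequalities.

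\emph{Step 2 (an antichain of density pairs).} The pairs $(\underline d,\overline d)$ lie in the poset $D:=\{(a,b):0\le a\le b\le 1\}$ with the coordinatewise order. The antidiagonal $\{(t,1-t):0\le t<\tfrac12\}$ is a continuum-sized antichain of $D$: for $t<t'$ we have $t<t'$ yet $1-t>1-t'$, so the two pairs are incomparable. By the contrapositive of Step 1, if the members of a family $\mathcal A$ realize pairwise incomparable density pairs, then for distinct $A,A'\in\mathcal A$ no translate of $A$ is almost included in $A'$ (and, by symmetry, none of $A'$ in $A$), which is exactly the conclusion of the lemma. So it remains only to realize these pairs.

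\emph{Step 3 (realization), the main point.} For each $t\in[0,\tfrac12)$ I would construct a set $A_t\subseteq\NN$ with $\underline d(A_t)=t$ and $\overline d(A_t)=1-t$ by a standard oscillation argument: fix a sufficiently rapidly increasing sequence $n_0<n_1<\cdots$ and build $A_t$ on the successive intervals $(n_i,n_{i+1}]$, alternately filling an interval densely so that $|A_t\cap[0,n_{i+1}]|/(n_{i+1}+1)$ is driven close to $1-t$, and then leaving the next interval nearly empty so that the same ratio is driven close to $t$. Since the two targets $t$ and $1-t$ both lie in $[0,1]$ and are distinct, choosing the $n_i$ to grow fast enough makes the normalized counting function oscillate with exactly these two accumulation values, giving $\underline d(A_t)=t$ and $\overline d(A_t)=1-t$. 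Distinct $t$ give distinct densities, hence distinct sets, so $\mathcal A:=\{A_t:0\le t<\tfrac12\}$ has cardinality $2^{\aleph_0}$. The only real obstacle is verifying this simultaneous prescription of lower and upper density, but it reduces to elementary feasibility bounds on the block sizes (always achievable once consecutive thresholds $n_i$ grow fast enough); everything else is bookkeeping.
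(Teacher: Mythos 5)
Your strategy is correct and genuinely different from both of the paper's proofs. The paper's first construction takes an almost disjoint family of $2^{\aleph_0}$ infinite subsets of a set $X=\{x_n : n\in\NN\}$ with strictly increasing gaps, so that any nonzero translate of a subset of $X$ is almost disjoint from $X$ itself (the zero translate being handled by almost disjointness); its second construction uses Sturmian words of a fixed irrational slope and the balanced property. You instead attach to each set the pair $(\underline d,\overline d)$ of asymptotic densities, observe that this pair is coordinatewise monotone under ``some translate is almost included in'' (your Step 1 is correct, including the treatment of negative shifts), and realize a continuum-sized antichain of pairs along the antidiagonal. What this buys: the conclusion for \emph{all} translates at once falls out of a one-line monotonicity argument rather than a combinatorial analysis, and as a bonus your sets are automatically not eventually periodic (an eventually periodic set has a density, i.e.\ $\underline d=\overline d$, while your sets have $\underline d \neq \overline d$), which is precisely the extra property needed when the lemma is fed into Corollary \ref{pz3}.

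One concrete caution about Step 3, which is where all the content sits: controlling the ratio $|A_t\cap[0,N]|/(N+1)$ only at the block endpoints $N=n_{i+1}$ is not enough, and ``filling an interval densely'' is dangerous as stated. If you fill the initial portion of a long up-block at density near $1$ and stop once the endpoint ratio will be $1-t$, then at the moment you stop the running ratio is close to $1$; every $A_t$ would then have $\overline d(A_t)=1$, the pairs $(t,1)$ would form a chain rather than an antichain, and the whole argument would collapse. The standard fix: inside an up-block spread the chosen elements at uniform density $1-t$, so the running ratio climbs monotonically from about $t$ toward $1-t$ without overshooting, and calibrate each empty down-block so the ratio decays from $1-t$ to about $t$ (this forces $n_{i+1}/n_i\approx(1-t)/t$ on down-blocks, so the sequence cannot simply be ``sufficiently rapidly increasing''). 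Alternatively, define the phases adaptively: include every integer until the running ratio first exceeds $1-t$, then include nothing until it first drops below $t$, and repeat; since the ratio moves in steps of size $O(1/N)$, this gives $\underline d(A_t)=t$ and $\overline d(A_t)=1-t$ exactly, for every $t\in(0,\tfrac12)$, which is still continuum many values. With either implementation your proof is complete.
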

\begin{proof} We present two proofs. For the first one,  start with $X:=\{x_n: n \in \NN\}$ where $x_0=0$  and $x_{n+1}=x_n + n$ (one just need the gaps increasing).
Now, let $\mathcal A$  be an almost disjoint family of $2^{\aleph_0}$ infinite subsets of  $X$.  For any $A\in \mathcal A$, and $n>0$, $A+n$ is almost disjoint from $X$, and thus almost disjoint from any other $A'$.

The second proof makes use of Sturmian words (\cite{Lo}, \cite{All-Sha}).
We identify subsets of $\mathbb{N}$ with their characteristic functions, that is  binary words. Hence, translating a set corresponds to shifting the word (note  that in this correspondence, if $u$ and $v$ are two infinite binary words,
we have $u \leq v$ iff $u^{-1}(1)\subseteq v^{-1}(1)$; however some translate of $u^{-1}(1)$ can be a contained into $v^{-1}(1)$ whereas no iterated shift of $u$ is almost contained in $v$, an observation leading to Problem
\ref{problem: shift} below).

Let $\alpha \in (0,1)\setminus \mathbb{Q}$. Let $X_\alpha$ be the set of Sturmian words whose slope is $\alpha$ (here
the \emph{slope} is the frequency of the letter $1$). The set $X_\alpha$ is a minimal uncountable subshift which is \emph{balanced}: for any two finite binary words $u$ and $v$ that appear as factors of elements of $X_\alpha$, if $u$ and $v$ have the same length, then $|u|_1-|v|_1\leq 1$ (where $|u|_1$ denotes the number of occurrences of the letter $1$ in $u$).

Let us define the following equivalence relation on $X_\alpha$: $x\sim y$ if there exists two integers $p$ and $q$ such that $S^p(x) = S^q(y)$ ($S$ denotes the shift map and $S^p$ its $p$ iterates, i.e $S$ is the map  from $2^{\NN}$ to $2^{\NN}$  defined by $S(x)_i:=x_{i+1}$ for $i\in \NN$. Since each class is countable, the quotient $X_\alpha/\sim$ is uncountable: Let $\mathcal{A}\subseteq X_\alpha$ be a system of representatives of $X_\alpha/\sim$.

Now, let $x$ and $y$ be two elements of $\mathcal{A}$ such that there exists an integer $n$ such that $S^n(y)$ is almost included in $x$, let us prove that $x=y$.

There exists an integer $k$ such that for any index $i$, $S^k(x)_i \geq S^{n+k}(y)_i$. If $S^k(x) = S^{n+k}(y)$, then $x=y$. Otherwise, there exists an index $i$ such that $S^k(x)_i =1$ and $S^{n+k}(y)_i=0$. Since $S^k(x)$ and $S^{n+k}(y)_i$ are elements of $X_\alpha$ which is balanced, $S^k(x)_i = S^{n+k}(y)_i$ for any $j\neq i$,
in particular $S^{k+i+1}(x) = S^{n+k+i+1}(y)$ and again $x=y$.
\end{proof}

 We thank Thierry Monteil \cite{monteil} for providing the second proof. We thank the referee for providing an alternative proof using words obtained by irrational rotations. For a link between irrational rotations and Sturm words, see Chapter 6 of \cite{fogg}.

\begin{problem}\label {problem: shift}Let us recall that a subset $X$ of $2^{\NN}$ is \emph{shift-invariant} if $S(X)\subseteq X$, where $S(X):=\{S(x):x\in X\}$. It is \emph{minimal} if it is non-empty,  compact,  shift-invariant and no proper  subset has the same properties (cf \cite{All-Sha}). For example the set $X_{\alpha}$ of Sturmian words with slope $\alpha$ is minimal. Is it true that every infinite minimal set $X$ contains a subset $X'$ of cardinality $2^{\aleph_0}$ such that  for every distinct $u,u'\in X'$, no translate of $u^{-1}(1)$ is almost contained in   $u'^{-1}(1)$?.
\end{problem}

\begin{corollary}\label{pz3}There is a family  $\mathcal A$ of subsets of $\NN\setminus\{0,1\}$ indexed by the positive reals  such that $G(A)$ is prime and embedding rigid for every $A\in \mathcal A$, and furthermore $G(A)\nleqslant_{fin} G(A')$ for
all distinct $A, A' \in \mathcal A$.
\end{corollary}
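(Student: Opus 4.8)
The plan is to assemble the corollary directly from the three preceding lemmas, the only genuine work being to check that the family produced in Lemma \ref{pz2} already carries the extra features required here. First I would take the family $\mathcal A$ of $2^{\aleph_0}$ subsets of $\NN$ furnished by the first proof of Lemma \ref{pz2}, in which each $A\in\mathcal A$ is an infinite subset of the set $X=\{x_n:n\in\NN\}$ whose consecutive gaps $x_{n+1}-x_n$ tend to infinity, and $\mathcal A$ is almost disjoint. Since $|\mathcal A|=2^{\aleph_0}=|\{r\in\RR: r>0\}|$, I may re-index $\mathcal A$ by the positive reals, as demanded by the statement.

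Next I would verify the two hypotheses of Lemma \ref{pz5} for each $A$. To secure $A\subseteq\NN\setminus\{0,1\}$, and in particular $1\notin A$, I simply replace each $A$ by $A\setminus\{0,1\}$; deleting at most two points keeps $A$ infinite and, since both \emph{almost included} and $\leq_{fin}$ ignore finite sets, leaves every relevant property intact. The second hypothesis, that $A$ is not eventually periodic, comes for free from the sparsity of $X$: an infinite eventually periodic subset of $\NN$ has bounded gaps, whereas the gaps of any infinite $A\subseteq X$ are sums of consecutive gaps of $X$ and hence tend to infinity. Thus no $A\in\mathcal A$ is eventually periodic, and Lemma \ref{pz5} yields that $G(A)$ is both prime and embedding rigid for every $A\in\mathcal A$.

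Finally, for the separation clause $G(A)\nleqslant_{fin} G(A')$, I would invoke the equivalence of Lemma \ref{pz1}: one has $G(A)\leq_{fin}G(A')$ if and only if some translate $t_k(A)$ is almost included in $A'$. The defining property of $\mathcal A$ supplied by Lemma \ref{pz2} is precisely that for distinct $A,A'$ no translate of $A$ is almost included in $A'$, so $G(A)\nleqslant_{fin} G(A')$ for all distinct $A,A'\in\mathcal A$.

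I expect no serious obstacle, as the corollary is essentially a packaging of Lemmas \ref{pz5}, \ref{pz1} and \ref{pz2}. The one point I would make explicit is that the carrier set $X$ was chosen sparse enough that \emph{every} infinite subset automatically fails to be eventually periodic, which is exactly what Lemma \ref{pz5} needs for embedding rigidity. One should also confirm that the word \emph{translate} in Lemma \ref{pz2} is read as ranging over all $t_k$ with $k\in\ZZ$ (including left shifts and $k=0$), matching the translations appearing in Lemma \ref{pz1}; the gap argument for $X$ covers left shifts as well, so this causes no trouble.
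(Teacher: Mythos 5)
Your proposal is correct and follows essentially the same route as the paper, whose proof is the single line ``Apply Lemma \ref{pz2}'': the intended argument is exactly your assembly of Lemmas \ref{pz5}, \ref{pz1} and \ref{pz2} using the sparse-set construction from the first proof of Lemma \ref{pz2}. Your explicit checks --- that infinite subsets of the sparse set $X$ cannot be eventually periodic, that deleting $\{0,1\}$ is harmless, and that translates with $k\le 0$ are covered --- are precisely the details the paper leaves to the reader.
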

\begin{proof} Apply Lemma \ref{pz2}.
\end{proof}

 \begin{corollary}\label{pz4} There is a family  $\mathcal L$ of $2^{\aleph_0}$  linear orders  on $\NN$ of order type $\omega$ which are orthogonal
to $\omega$. Furthermore,  $(\NN, \leq,\leq_{L})\nleqslant_{fin} (\NN, \leq,\leq_{L'})$ for every distinct $L, L'\in \mathcal L$.
\end{corollary}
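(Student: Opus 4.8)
The plan is to derive Corollary \ref{pz4} directly from the machinery already in place, namely Corollary \ref{pz3} together with Lemma \ref{lem:PA} and Lemma \ref{pz1}. First I would invoke Corollary \ref{pz3} to fix a family $\mathcal A$ of $2^{\aleph_0}$ subsets of $\NN\setminus\{0,1\}$ such that each $G(A)$ is prime and embedding rigid, and such that $G(A)\nleqslant_{fin} G(A')$ for distinct $A,A'\in \mathcal A$. For each such $A$ I would then pass to the poset $P(A)$ and its two canonical linear extensions $L(A)$ and $L'(A)$ of order types $\omega$ and $\omega^*$ from Lemma \ref{lem:PA}. Since every $A\in\mathcal A$ avoids $1$ and is not eventually periodic (being a member of an almost-disjoint-type family of the kind produced in Lemma \ref{pz2}, hence not eventually periodic), that lemma tells us $L(A)$ and $L'(A)$ are orthogonal.

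The only cosmetic mismatch is that Lemma \ref{lem:PA} yields an orthogonal pair of types $\omega$ and $\omega^*$, whereas the statement asks for a linear order of type $\omega$ orthogonal to the natural order $\leq$ (also of type $\omega$). I would reconcile this by reversing $L'(A)$: the dual $(L'(A))^*$ has order type $\omega$, and orthogonality is preserved under replacing a relation by its reverse, since reversing both coordinates of an order is a symmetry of the notion of semirigidity (the endomorphism monoid of $(V,\rho,\rho')$ equals that of $(V,\rho,\rho'^{-1})$, as preserving $\rho'^{-1}$ is the same as preserving $\rho'$ read backwards). Concretely, after identifying the underlying set $\NN\cup\hat A$ with $\NN$ via any bijection, I take $\leq_{L(A)}$ to be the image of $L(A)$ under this identification, which has type $\omega$, and the natural order $\leq$ to be the image of $(L'(A))^*$; these are orthogonal, giving the family $\mathcal L:=\{\leq_{L(A)} : A\in\mathcal A\}$ of size $2^{\aleph_0}$, each orthogonal to $\omega$.

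For the second assertion, that $(\NN,\leq,\leq_{L})\nleqslant_{fin}(\NN,\leq,\leq_{L'})$ for distinct $L,L'\in\mathcal L$, I would argue by contraposition through the comparability graph. A finite-error embedding of the bichain $(\NN,\leq,\leq_{L(A)})$ into $(\NN,\leq,\leq_{L(A')})$ is in particular a finite-error embedding of the associated poset $P(A)$ into $P(A')$, hence of the comparability graphs, which are exactly $G(A)$ and $G(A')$ by Lemma \ref{lem:PA}. This would give $G(A)\leqslant_{fin} G(A')$, contradicting the defining property of $\mathcal A$ from Corollary \ref{pz3}. Thus distinct members of $\mathcal L$ are pairwise incomparable under $\leqslant_{fin}$.

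The step I expect to require the most care is the bookkeeping that a finite-error embedding of bichains descends to a finite-error embedding of comparability graphs: one must check that removing a finite set from the bichain corresponds to removing a finite set from $G(A)$ (including the companion vertices $\hat A$), and that an embedding preserving both linear orders preserves their intersection and hence the comparability relation of $P(A)$. This is routine given Lemma \ref{pz1} and the explicit description of $P(A)$, but it is where the identification of underlying sets and the accounting of the finitely many exceptional vertices must be handled honestly; everything else is a direct appeal to the quoted lemmas.
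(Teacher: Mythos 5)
Your proposal is correct and takes essentially the same route as the paper: select $\mathcal A$ from Corollary \ref{pz3}, apply Lemma \ref{lem:PA}, dualize $L'(A)$ so both orders have type $\omega$, transfer everything to $\NN$ by a bijection, and obtain the pairwise $\nleqslant_{fin}$ claim by descending from bichain embeddings to embeddings of the comparability graphs, contradicting $G(A)\nleqslant_{fin}G(A')$. One cosmetic remark: rather than arguing that the sets in $\mathcal A$ are not eventually periodic by appealing to the construction in Lemma \ref{pz2}, you can simply use the primality and embedding rigidity of $G(A)$ asserted in Corollary \ref{pz3} together with Theorem \ref{inf3}, which is all that the proof of Lemma \ref{lem:PA} actually needs.
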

\begin{proof}
Select $\mathcal A$ as in Corollary \ref{pz3}  and apply Lemma \ref{lem:PA}. To  $A$  associate the prime poset $P(A)$  then the unique bichain  $B(A):=(\NN\cup \hat A, L(A), L'(A))$ where $L(A)$, $L'(A)$ have order type $\omega$ and $\omega^*$ respectively and  such that the intersection
$L(A)\cap L'(A)$ is the order of $P(A)$. The bichain $B(A)$ is semi-rigid and $B (A)\not\leq_{fin} B(A')$ for $A\not =A'$. Replace each  $B(A)$ by  the bichain  $B^{*}(A):=(\NN\cup \hat A, (L(A), L^{'*}(A))$ where $L^{'*}(A)$ is the dual of $L'(A)$. These bichains enjoy the same properties as the  $B(A)$'s. The components of these bichains being chains of order type $\omega$, we may suppose via a bijective map that their common domain is $\NN$ and the first order is the natural order.  This yields the above collection. \end{proof}


\section{Proof of Theorem \ref{cio}} \label{section: cio}

\begin{theorem}\label{cio1}
An ordinal $\alpha$ is orthogonal to $\omega$ if and only if $\alpha$
is countably infinite and $\omega \alpha >\alpha$.
\end{theorem}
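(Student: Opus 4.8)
The plan is to read orthogonality through Theorem~\ref{inf3}: the two orders are orthogonal exactly when the bichain $B:=(V,\mathcal L,\mathcal M)$, with $\mathcal L$ of type $\omega$ and $\mathcal M$ of type $\alpha$, is prime and embedding rigid. Two reductions make this manageable. First, $V$ carries a well order of type $\omega$, so it is countably infinite and hence $\alpha$ is a countable infinite ordinal; the converse inclusion is part of the hypothesis, so I may assume $\alpha$ countably infinite throughout. Second, since $\alpha>\omega$, no self-embedding of $B$ can interchange $\mathcal L$ and $\mathcal M$ (that would embed a chain of type $\alpha$ into one of type $\omega$), so Case~2 of Theorem~\ref{inf3} never occurs here; combining this with Theorem~\ref{zaguia} and Remark~\ref{rem:semi-rigid}, every non-trivial endomorphism of a \emph{prime} $B$ must be an injective, non-identity self-embedding, and $B$ fails to be semirigid precisely when it is either non-prime or not embedding rigid. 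Finally, recalling that $\omega\alpha>\alpha$ is equivalent to $ind(\alpha)<\omega^\omega$, i.e. to the least Cantor exponent of $\alpha$ being finite, I will exploit this finiteness in the construction and exploit its failure to manufacture endomorphisms.

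For necessity I argue the contrapositive: assuming $\omega\alpha=\alpha$, I produce a non-trivial endomorphism of \emph{every} such $B$. If $B$ is not prime I collapse a non-trivial autonomous set to one of its points while fixing its complement, which by Remark~\ref{rem:semi-rigid} is a non-constant, non-identity common endomorphism. So I may assume $B$ prime and must produce a non-identity self-embedding. Here I use that $\omega\alpha=\alpha$ forces $\alpha\geq\omega^\omega$ and $\omega+\alpha=\alpha$; consequently deleting the $\mathcal L$-least point (indeed any finite set) from $V$ leaves a subset whose $\mathcal M$-type is again $\alpha$ and whose $\mathcal L$-type is again $\omega$. The goal is then to embed $B$ into this proper $\mathcal L$-final segment of itself, the resulting self-embedding being non-identity since its range omits the $\mathcal L$-least point. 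I would build the embedding by induction along the $\mathcal L$-enumeration $x_0,x_1,\dots$, placing the image of $x_n$ strictly $\mathcal L$-above all previous images while respecting its prescribed $\mathcal M$-position, using the decomposition of $(V,\mathcal M)$ into $\alpha$ consecutive blocks of type $\omega$ (available because $\omega\alpha=\alpha$) to ensure the required $\mathcal M$-slot still contains $\mathcal L$-large elements. The main obstacle is exactly this simultaneous control of two well orders: gaps between consecutive elements of a well order are empty, so the induction must be organised so that the $\mathcal M$-interval receiving $x_n$ retains infinitely many points of arbitrarily large $\mathcal L$-rank. This is the technical heart of the direction and where the block decomposition together with the absorption $\omega+\alpha=\alpha$ is genuinely used.

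For sufficiency I must, given a countable $\alpha$ with $\omega\alpha>\alpha$, exhibit one semirigid bichain of type $(\omega,\alpha)$. The base case $\alpha=\omega$ is already supplied by Theorem~\ref{omega} and Corollary~\ref{pz4}, which furnish a linear order of type $\omega$ orthogonal to the natural order, realised by the prime, embedding-rigid combs $P(A)$ of Section~\ref{section: omega}. For general $\alpha$ I would proceed by recursion on $\alpha$, adapting that construction: since $ind(\alpha)=\omega^{n}$ with $n$ finite, I write $\alpha=\delta+\omega^{n}$ and realise $\mathcal M$ as an ordered sum whose final block has the small, finite-level type $\omega^{n}$, this block serving as a distinguished "anchor"; the finiteness of $n$ is what lets the anchor pin down a common endomorphism rather than be shifted away. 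The infinite bulk $\delta$ would be controlled by interleaving a rigid comb $G(A)$ with $A$ not eventually periodic (Corollary~\ref{pz3}), whose rigidity stems, via Lemma~\ref{pz5}, from the impossibility of a translation carrying a non-eventually-periodic set into itself. One then verifies, through Theorem~\ref{inf3} and part~(b) of Theorem~\ref{thm:gallai-kelly}, that the resulting poset $P=(V,\mathcal L\cap\mathcal M)$ is prime and admits only the identity embedding. I expect the genuine work to be the bookkeeping that forces $\mathcal M$ to have exactly type $\alpha$ while preserving both primality and embedding rigidity; no new idea beyond those already present in Section~\ref{section: omega} should be required.
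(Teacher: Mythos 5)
Your necessity half is correct and is essentially the paper's own route. The self-embedding you sketch is precisely the content of Lemma~\ref{lem:sierp} (which the paper imports from earlier work and uses inside Lemma~\ref{lem:notrigid}): send the $n$-th element $x_n$ of the $\omega$-order into the type-$\omega$ block of $(V,\mathcal M)$ determined by the $\mathcal M$-rank of $x_n$, choosing the image $\mathcal L$-above all previously chosen images. Moreover, the ``main obstacle'' you flag is not an obstacle: each block has order type $\omega$, hence is infinite, and \emph{every} infinite subset of a chain of type $\omega$ is cofinal in it, so at each step the designated block still contains points of arbitrarily large $\mathcal L$-rank and the greedy construction never gets stuck. (Your case split on primality is also unnecessary: the non-identity self-embedding exists whether or not $B$ is prime, and failure of embedding rigidity already kills semirigidity by Remark~\ref{rem:semi-prime}.)

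The sufficiency half has a genuine gap, located exactly where you assert that ``no new idea beyond those already present in Section~\ref{section: omega} should be required.'' There are two concrete problems. First, your anchor is a final block of type $\omega^n$, and for extensiveness to let it pin down common endomorphisms you need the restriction of the $\omega$-order to the anchor to be \emph{orthogonal} to $\omega^n$, i.e.\ you need $\omega\perp\omega^n$ for every finite $n$. Section~\ref{section: omega} only yields $\omega\perp\omega$; the step to $\omega^n$ for $n\geq 2$ is the paper's Lemma~\ref{lem:10}, proved by induction using genuinely new ingredients: a partition of $\NN$ into $\omega^{n-1}$ infinite blocks carrying \emph{pairwise} $\nleqslant_{fin}$-incomparable orthogonal bichains from the family of Corollary~\ref{pz4} (the incomparability is what prevents an embedding from sliding one block into a later one), together with the structure Lemma~\ref{lem:embeddingchain} on self-embeddings of chains of type $\omega^{n}$. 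None of this appears in your sketch. Second, your mechanism for the bulk $\delta$ --- ``interleaving a rigid comb'' --- cannot work as stated: take $\alpha=\omega^\omega+\omega$, so $\delta=\omega^\omega$. By the necessity half you yourself proved, $\omega^\omega$ is not orthogonal to $\omega$, so \emph{no} order placed on the $\delta$-part, comb-derived or not, can be rigid against a type-$\omega$ order. The control of the bulk must be extrinsic, and this is what the paper's Lemma~\ref{arith1} supplies: the bulk carries an \emph{arbitrary} order of type $\delta$, but its points occupy one residue class mod $2$ of the $\omega$-chain, interleaved with the anchor; an embedding is first forced to be the identity on the anchor (extensiveness plus orthogonality there), and then Lemma~\ref{equiclasses}(a) --- an embedding of an ordinal which is the identity on a residue class mod $n$ is the identity --- propagates the identity to the bulk. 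That placement and that lemma, together with the separate successor-case argument of Lemma~\ref{arith1}(ii) (needed, e.g., for $\alpha=\omega^\omega+1$, where again the bulk admits no rigid structure of its own), are the missing ideas.
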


Trivially, an ordinal $\alpha$ orthogonal to $\omega$ must be
countably infinite. The fact that $\omega \alpha >\alpha$ is a
consequence of the next lemma.

\begin{lemma}\label{lem:notrigid}
If $\alpha$ is a countably infinite order type (not necessarily an
ordinal) and $\omega \alpha \leq \alpha$, then the bichain $(\NN,
\leq, \leq_{L})$, where $L:=(\NN, \leq_{L})$ is a chain of order type
$\alpha$, is not embedding rigid.
\end{lemma}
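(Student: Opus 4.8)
The plan is to exhibit a single endomorphism of the bichain $B:=(\NN,\le,\le_L)$ that is one-to-one but is not the identity; by the definition of embedding rigidity this is exactly what ``not embedding rigid'' requires. First I would unwind what such a map is. Since $\le$ and $\le_L$ are both linear, a one-to-one $f:\NN\to\NN$ is a self-embedding of $B$ precisely when it is strictly increasing for $\le$ \emph{and} strictly increasing for $\le_L$, i.e.\ $m\le n\iff f(m)\le f(n)$ and $m\le_L n\iff f(m)\le_L f(n)$. Because $(\NN,\le)$ has order type $\omega$, the first condition says exactly that $f$ is the increasing enumeration of an infinite set $S:=f(\NN)\subseteq\NN$, and $f\neq\mathrm{id}$ amounts to $S\neq\NN$. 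So the goal becomes: produce a strictly $\le$-increasing $f$ that is simultaneously a $\le_L$-embedding and is not onto.

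The hypothesis $\omega\alpha\le\alpha$ is what supplies the room to move a point. I would fix a $\le_L$-embedded copy $D$ of $\omega\alpha$ inside $L=(\NN,\le_L)$; thus $D$ splits into $\alpha$-many consecutive blocks each of type $\omega$. The use of this is that above the bottom of any block there is still an entire $\omega$-tail, and below the bottom of any non-initial block there is an infinite initial stretch of $D$; consequently $L$ contains infinite $\le_L$-intervals both below and above any prescribed finite configuration of points chosen from $D$. For ordinals this is transparent: $\omega\alpha=\alpha$ forces $\omega+\alpha=\alpha$ by the Cantor decomposition used in the Fact, so $L$ is $\le_L$-isomorphic to a copy of $\omega$ followed by a copy of itself. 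In all cases the crucial corollary is that any \emph{infinite} $\le_L$-interval, being an infinite subset of $\NN$, meets arbitrarily large $\le$-values, because $\le$ enumerates $\NN$ in type $\omega$.

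With this in hand I would build $f$ by recursion along the well-order $\le$, deciding $f(0),f(1),\dots$ in turn. At stage $n$ the value $f(n)$ must be placed in the unique $\le_L$-interval $J_n$ determined by the $\le_L$-position of $n$ relative to $\{0,\dots,n-1\}$, so that $f$ remains a $\le_L$-embedding on $\{0,\dots,n\}$, and it must satisfy $f(n)>f(n-1)$ in $\le$, so that $f$ stays strictly $\le$-increasing; choosing $f(0)\neq 0$ at the outset guarantees $f\neq\mathrm{id}$. Verifying the two equivalences on every pair suffices, both relations being binary, so the limiting map is a genuine common self-embedding. The maintained invariant is that every interval cut out by the images chosen so far which will still have to receive points is infinite; when $J_n$ is infinite it contains $\le$-values exceeding $f(n-1)$, so a legal choice of $f(n)$ exists, and by placing $f(n)$ inside a block of a copy of $\omega\alpha$ sitting in $J_n$ the relevant sub-intervals retain enough room. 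Once $f$ is produced, $B$ is not embedding rigid, which is the assertion.

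The delicate point, and the place where $\omega\alpha\le\alpha$ is essential, is that the enumeration $\le$ is adversarial with respect to $\le_L$: a $\le_L$-small element may receive a very large $\le$-label and hence be processed late, at which stage its image must be squeezed $\le_L$-below all previously chosen images and yet be given a $\le$-value larger than all of them. This is feasible exactly because the $\le_L$-interval below the current images is infinite (it contains an initial stretch of an embedded $\omega\alpha$) and therefore meets arbitrarily large $\le$-levels; the block structure coming from $\omega\alpha\le\alpha$ is what keeps every such interval infinite throughout the recursion. If instead $\omega\alpha>\alpha$ this room disappears, consistently with Theorem~\ref{cio1} and with the orthogonal examples of Section~\ref{section: omega}. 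I expect the genuine work of a full proof to lie in formulating the invariant so that it is provably preserved at every stage and bottoms out, rather than in any single inequality.
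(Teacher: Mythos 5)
Your overall strategy---defeating the adversarial $\le$-enumeration by placing images inside the $\omega$-blocks of an embedded copy of $\omega\alpha$---is the right one, and it is the idea underlying the paper's proof. But there is a genuine gap exactly at the point you yourself flag: the invariant you maintain is the wrong one, and it cannot be repaired locally. ``Every pending interval is infinite'' is too weak to be preservable. Concretely, let $\alpha=\omega^\omega\cdot 2$ (so $\omega\alpha=\alpha$) and choose $L$ so that $0$ has exactly $\omega^\omega$ many $\le_L$-predecessors and $1<_L 0$. At stage $0$ your rules permit choosing $f(0)$ to be a point with exactly $\omega$ many $\le_L$-predecessors: both intervals it cuts out are infinite, so your invariant holds. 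But now $J_1$, the interval $\le_L$-below $f(0)$, has order type $\omega$, so \emph{whatever} you choose for $f(1)\in J_1$, only finitely many points of $J_1$ lie $\le_L$-below $f(1)$, while infinitely many future domain elements lie $\le_L$-below $1$; the recursion is dead, and the fatal move was at stage $0$, where the invariant was satisfied. So infinitude must be replaced by an invariant recording, for each pending domain interval, a system of disjoint infinite blocks indexed by a chain into which the remaining part of that interval embeds. Note also that your phrase ``a copy of $\omega\alpha$ sitting in $J_n$'' cannot be taken literally (a sub-interval only contains copies of $\omega\gamma$ for appropriately smaller $\gamma$), and placing $f(n)$ in an \emph{arbitrary} block does not suffice either: the index of the block must match the $\le_L$-position of $n$ under a fixed isomorphism, as the same example shows. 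This bookkeeping, which you explicitly defer as ``the genuine work,'' is the entire content of the proof.

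For comparison, the paper organizes the same idea statically, so that no dynamic invariant is needed. Since $\omega\alpha\le\alpha$, there is a \emph{proper} subset $X\subsetneq\NN$ with $L_{\restriction X}$ of type $\omega\alpha$; let $h$ be the $\le$-increasing enumeration of $X$ and let $L_2$ be the pullback of $\le_L$ along $h$, a chain of type $\omega\alpha$ for which $h$ is an embedding of $(\NN,\le,\le_{L_2})$ into $(\NN,\le,\le_{L})$. By Lemma \ref{lem:sierp}---whose proof is your recursion done right: fix once and for all an isomorphism $\iota$ of $(\NN,\le_L)$ onto the chain indexing the $\omega$-blocks of $L_2$, and send each $n$ to a sufficiently $\le$-large element of block number $\iota(n)$; no interval analysis is needed because distinct points land in distinct blocks whose mutual order reproduces $\le_L$---there is an embedding $f$ of $(\NN,\le,\le_{L})$ into $(\NN,\le,\le_{L_2})$. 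The composite $h\circ f$ is then a self-embedding of $(\NN,\le,\le_{L})$ with range inside $X\ne\NN$, hence non-surjective, hence not the identity. If you fix the copy of $\omega\alpha$ and the isomorphism $\iota$ globally at the outset, instead of hunting for fresh blocks inside every interval as the recursion proceeds, your construction collapses into exactly this argument.
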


For the proof of Lemma \ref{lem:notrigid}, we use the following
result, which is essentially Lemma 3.4.1 of \cite{mauricenejib}.

\begin{lemma}\label{lem:sierp}
Let $\alpha$ be a countably infinite order type,
$L_1:=(\NN,\leq_{L_1})$ and $L_2:=(\NN,\leq_{L_2})$ be two chains of
order type $\alpha$ and $\omega \alpha$ respectively. Then there is an
embedding of $(\NN, \leq, \leq_{L_1})$ into $(\NN, \leq,\leq_{L_2})$.
\end{lemma}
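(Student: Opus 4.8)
The plan is to construct the embedding $f\colon \NN\to\NN$ explicitly by a greedy induction along the natural order of the source, exploiting the block structure that the order type $\omega\alpha$ forces on $L_2$. First I would unwind what an embedding of the bichain $(\NN,\leq,\leq_{L_1})$ into $(\NN,\leq,\leq_{L_2})$ has to be: a one-to-one map $f$ satisfying both equivalences $x\leq y\iff f(x)\leq f(y)$ and $x\leq_{L_1}y\iff f(x)\leq_{L_2}f(y)$. The first equivalence, for the natural order, simply says that $f$ is strictly increasing for $\leq$, so it reduces to choosing the sequence of values $f(0),f(1),f(2),\dots$ strictly increasing in $\NN$; injectivity is then automatic, and both directions of the $\leq$-equivalence follow from strict monotonicity.

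The structural heart of the argument is that, since $\leq_{L_2}$ has type $\omega\alpha$, namely the lexicographical sum of copies of $\omega$ indexed by a chain of type $\alpha$, the target set $\NN$ partitions into blocks $(D_i)_{i}$ indexed by a chain of type $\alpha$ (which I identify with the source set carrying $\leq_{L_1}$), so that $i<_{L_1}j$ implies every element of $D_i$ is $<_{L_2}$ every element of $D_j$, and each $D_i$ is a single $\leq_{L_2}$-copy of $\omega$. The point I want to extract is that each block $D_i$, being an \emph{infinite} subset of $(\NN,\leq)$, is cofinal (unbounded) in the natural order. I would then define $f$ by induction on the source natural order: having chosen $f(0),\dots,f(n-1)$, select $f(n)\in D_n$ larger in the natural order than all previously chosen values, which is possible precisely because $D_n$ is $\leq$-cofinal.

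It remains to verify that this $f$ works. Natural-order monotonicity holds by construction, giving the $\leq$-equivalence. For the second equivalence, placing $f(x)$ in block $D_x$ makes it immediate: distinct source points land in distinct blocks, and if $x<_{L_1}y$ then $D_x<_{L_2}D_y$ forces $f(x)<_{L_2}f(y)$, with the converse following because the block order is linear. The main subtlety, and the only place one could fear a conflict, is that respecting $\leq_{L_1}$ pins $f(x)$ to the block $D_x$ while respecting $\leq$ forces the values ever upward; these two demands could in principle clash. The resolution, which is exactly where the hypothesis $\omega\alpha$ (rather than $\alpha$) enters, is that $\omega\alpha$ blows up each point of $\alpha$ into a full copy of $\omega$, so every block is $\leq$-unbounded and the upward room in the natural order never runs out.
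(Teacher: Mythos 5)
Your proof is correct. There is nothing in the paper to compare it against: the authors do not prove Lemma \ref{lem:sierp} at all, they merely cite it as ``essentially Lemma 3.4.1'' of Pouzet--Zaguia \cite{mauricenejib}. Your argument is the natural self-contained one, and its two pivots are sound: first, since $\leq_{L_2}$ has type $\omega\alpha$, the target partitions into $\leq_{L_2}$-blocks of type $\omega$ indexed by a chain of type $\alpha$, which you may legitimately identify with $(\NN,\leq_{L_1})$; second, each block, being an infinite subset of $\NN$, is cofinal in the natural order, so the greedy choice of $f(n)\in D_n$ above the finitely many values $f(0),\dots,f(n-1)$ never gets stuck. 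Strict monotonicity in the natural order and the strict separation of distinct blocks then give both required equivalences, with the reverse implications following from linearity exactly as you say. One further merit worth recording: your induction runs along the natural order of the source rather than along $\leq_{L_1}$, so no well-ordering of $\alpha$ is ever used; the argument therefore covers arbitrary countably infinite order types, which is precisely the generality in which the lemma is invoked in the proof of Lemma \ref{lem:notrigid}.
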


\noindent\emph{Proof of Lemma \ref{lem:notrigid}.}
Let $L_{1}:= (\NN, \leq_{L_1})$ be a chain with order type
$\alpha$. Since $\omega \alpha \leq \alpha$, there is a subset $X$ of
$\NN$, and in fact a proper subset, such that $L_{1\restriction X}$
has order type $\omega \alpha$. Let $h$ be the unique order
isomorphism from $(\NN, \leq)$ onto $(X, \leq_{\restriction X})$ and
let $L_2:=(\NN, \leq_{L_2})$ where $x\leq_{L_2} y$ amounts to
$h(x)\leq_{L_1}h(y)$. Clearly, $L_2$ has order type $\omega\alpha$ and
$h$ is an embedding of $(\NN, \leq, \leq_{L_2})$ into $(\NN,
\leq,\leq_{L_1})$. According to Lemma \ref{lem:sierp}, there is an
embedding, say $f$, of $(\NN, \leq, \leq_{L_1})$ into $(\NN,
\leq,\leq_{L_2})$. The map $h\circ f$ is an embedding of $(\NN, \leq,
\leq_{L_1})$ into $(\NN, \leq, \leq_{L_1})$. Since $X\not =\NN$, this
map is not surjective, hence this is not the identity and thus $(\NN,
\leq, \leq_{L_1})$ is not embedding rigid.\hfill $\Box$

It remains to prove that if $\alpha$ is countable and
$ind(\alpha)<\omega ^\omega$, then $\alpha$ is orthogonal to
$\omega$. Note that $0\neq ind(\alpha)<\omega ^\omega$ amounts to
$ind(\alpha)=\omega^n$ for some integer $n$. The proof will proceed by
induction on $n$ after some necessary lemmas.

\begin{lemma}\label{lem:embeddingchain}
Let $n<\omega$ and $f$ be an embedding from a chain $C$ of order type
$\omega^{n+1}$ into itself which is not the identity. Let
${(C_{\alpha})}_{\alpha<\omega^n}$ be the decomposition of $C$ into
intervals of order type $\omega$. Then there are $\alpha\leq \beta$ such
that $f$ is not the identity on $C_{\alpha}$ and $f(C_\alpha)\setminus
C_{\beta}$ is finite. Furthermore, $f(C_{\alpha}) \subseteq C_{\alpha}$ if $\alpha=\beta$.
\end{lemma}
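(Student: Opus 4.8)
The plan is to exploit that $f$, being a one-to-one order preserving self-map of a well ordered chain, is extensive, and to track how $f$ displaces the blocks $C_\gamma$. Write $C=\sum_{\gamma<\omega^n}C_\gamma$ for the given decomposition, let $b(x)$ denote the index $\gamma$ with $x\in C_\gamma$, and set $u(\gamma):=b(f(\min C_\gamma))$. Since $f$ is order preserving and extensive, $u$ is non-decreasing and $u(\gamma)\geq\gamma$. Enumerate $C_\gamma=\{c_{\gamma,0}<c_{\gamma,1}<\cdots\}$. I will call $C_\gamma$ \emph{good} if $f$ is not the identity on $C_\gamma$ and the non-decreasing sequence $b(f(c_{\gamma,j}))$ is eventually constant, say equal to $\beta$. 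The whole statement reduces to producing a good block: for such a block $\alpha:=\gamma$ and $\beta$ satisfy $\alpha\leq\beta$ (extensiveness gives $b(f(c_{\gamma,j}))\geq\gamma$, so the eventual value is $\geq\gamma$), and $f(C_\alpha)\setminus C_\beta$ is finite. Moreover, if $\alpha=\beta$ then every $b(f(c_{\gamma,j}))$ lies between $\gamma$ and $\beta=\gamma$, hence equals $\gamma$, which is exactly the \emph{furthermore} clause $f(C_\alpha)\subseteq C_\alpha$.

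Call $C_\gamma$ \emph{spreading} if $f$ is not the identity on it but $b(f(c_{\gamma,j}))$ is not eventually constant; thus a non-identity block is either good or spreading, and I must rule out that every non-identity block is spreading. First I record the local effect of spreading. If $C_\gamma$ spreads then $\lambda_\gamma:=\sup_j b(f(c_{\gamma,j}))$ is a limit ordinal with $\lambda_\gamma>u(\gamma)$, and the images $f(c_{\gamma,j})$ are cofinal in the initial segment $\bigcup_{\delta<\lambda_\gamma}C_\delta$; hence $f(\min C_{\gamma+1})$ lies at block-index $\geq\lambda_\gamma$, that is $u(\gamma+1)\geq\lambda_\gamma$ (if $\lambda_\gamma=\omega^n$ this already contradicts the existence of $f(\min C_{\gamma+1})$). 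Since the least limit ordinal exceeding $u(\gamma)$ is $u(\gamma)+\omega$, this yields the key inequality $u(\gamma+1)\geq u(\gamma)+\omega$.

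Now suppose, for contradiction, that every non-identity block spreads (the case $n=0$ being trivial, since then $C=C_0$ maps into itself). Let $\gamma_0$ be the least index with $f$ not the identity on $C_{\gamma_0}$. Propagating the key inequality by transfinite induction, and using that $u$ is non-decreasing at limits, one shows that every block with index $\geq\gamma_0$ is non-identity—whence spreading—and that $u(\gamma_0+\xi)\geq\gamma_0+\omega\xi$ for all $\xi$: at a successor step the key inequality adds another $\omega$, and at limits one passes to the supremum. Indeed, non-identity at stage $\gamma_0+\xi$ follows because $u(\gamma_0+\xi)\geq\gamma_0+\omega\xi>\gamma_0+\xi$ forces $f(\min C_{\gamma_0+\xi})$ strictly above $\min C_{\gamma_0+\xi}$. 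Evaluating at $\xi=\omega^{\,n-1}$ gives the contradiction: the index $\gamma_0+\omega^{\,n-1}$ is still $<\omega^n$ because $\omega^n$ is additively indecomposable and both $\gamma_0$ and $\omega^{\,n-1}$ are smaller, yet $u(\gamma_0+\omega^{\,n-1})\geq\gamma_0+\omega\cdot\omega^{\,n-1}=\gamma_0+\omega^{n}\geq\omega^{n}$, which is impossible since $u$ takes values among block indices $<\omega^n$. Hence a good block exists.

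The routine parts (extensiveness, that $u$ is non-decreasing, the cofinality computation behind $u(\gamma+1)\geq\lambda_\gamma$, and the verification of the \emph{furthermore} clause) I expect to be painless. The main obstacle is the ordinal bookkeeping in the last paragraph: pinning down that spreading forces a jump of at least $\omega$ in $u$, carrying the estimate $u(\gamma_0+\xi)\geq\gamma_0+\omega\xi$ correctly through limit stages, and invoking additive indecomposability of $\omega^n$ to keep the index below $\omega^n$ while the value is driven up to $\omega^n$. A subtlety worth flagging separately is the degenerate possibility $\lambda_\gamma=\omega^n$, i.e. $f(C_\gamma)$ cofinal in all of $C$; this already contradicts the existence of $f(\min C_{\gamma+1})$ and is in any case absorbed by the same inequality $u(\gamma+1)\geq\omega^n$.
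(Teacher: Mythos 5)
Your proof is correct, but it follows a genuinely different route from the paper's. The paper argues by induction on $n$: it uses the \emph{coarser} decomposition $(A_k)_{k<\omega}$ of $C$ into intervals of type $\omega^{n}$, shows (Claim 1) that each $f(A_k)$ is, up to a remainder of order type $<\omega^{n}$, contained in a single block $A_{\phi(k)}$, then (Claim 2) selects a final segment $A'_0$ of $A_0$ compatible with the $\omega$-decomposition and satisfying $f(A'_0)\subseteq A_{\phi(0)}$, and finally transfers the problem to a chain of type $\omega^{n}$ via an order isomorphism $h$ of $C$ restricted to $A_{\phi(0)}$ onto $C$ restricted to $A'_0$, applying the induction hypothesis to $f':=h\circ f$. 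You instead work directly at the level of the $\omega$-blocks with a single transfinite induction: the good/spreading dichotomy, the displacement function $u(\gamma)=b(f(\min C_\gamma))$, the key inequality $u(\gamma+1)\geq u(\gamma)+\omega$ for spreading blocks (sound, since $\lambda_\gamma$ is an unattained supremum of a non-decreasing, not eventually constant sequence, hence a limit ordinal above $u(\gamma)$), the estimate $u(\gamma_0+\xi)\geq\gamma_0+\omega\xi$ carried through limit stages by continuity of ordinal arithmetic and monotonicity of $u$, and the overflow at $\xi=\omega^{n-1}$ using additive indecomposability of $\omega^{n}$; the auxiliary facts you rely on (extensiveness, $\omega\xi>\xi$ for $0<\xi<\omega^{\omega}$, non-identity of the blocks $C_{\gamma_0+\xi}$ forced by $\gamma_0+\omega\xi>\gamma_0+\xi$) all hold in the range where you need them. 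What your approach buys: it is self-contained, avoids the paper's isomorphism-transfer step (where one must track that $f'$ is again a non-identity embedding and that the two decompositions correspond under $h$), and it isolates the quantitative mechanism behind the lemma --- if no block's image settles inside a single block, the block displacement must jump by at least $\omega$ at every successor step, which is impossible below $\omega^{n}$. What the paper's approach buys: the induction on $n$ keeps the argument at the level of order types, so the explicit ordinal inequalities and the bookkeeping at limit stages (the main burden of your proof) never appear.
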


\begin{proof}We mention at first that the existence of $\alpha$ and $\beta$ such that $f(C_{\alpha})\setminus C_{\beta}$ is finite implies $\alpha\leq \beta$ and $f(C_{\alpha}) \subseteq C_{\alpha}$ if $\alpha=\beta$. Indeed, since $C$ is well ordered, $f(x)\geq x$  for every $x \in C$, hence $C_{\alpha'}\cap f(C_{\alpha})=\emptyset$ for every $\alpha' < \alpha$. Now the proof of the lemma goes by induction on $n$. If $n=0$ then $C=C_0$. Set $\alpha=\beta=0$. Since
$f(C_0)\subseteq C_0$, we have $f(C_0)\setminus C_0= \emptyset$, thus
this set is finite, as required, and we are done.

Let $n\geq 1$ and suppose that the property holds for $n'<n$. Let
$(A_k)_{k<\omega}$ be the decomposition of $C$ into intervals of order type
$\omega^{n}$.

\begin{claim} \label{Claim1}
There is an embedding $\phi : \omega \rightarrow \omega$ such that for
each $k<\omega$, $f(A_k)\setminus A_{\phi(k)}$ has order type $<
\omega^{n}$.
\end{claim}

\noindent {\bf Proof of Claim \ref{Claim1}.}
Let $k<\omega$.  Set $i(k):=\{l<\omega: \; f(A_k)\cap A_l\neq
\emptyset \}$. This set is finite and nonempty.  Set $\phi(k):=\max
(i(k))$. As it is easy to see, the map $\phi$ is an embedding from
$\omega$ into $\omega$. \hfill $\Box$

\begin{claim} \label{Claim2}
There is some element $a\in A_0$ such that the decomposition of
$A'_{0}:=\{x : a\leq x\}\cap A_0$ into intervals of order type $\omega$ is
induced by the decomposition of $C$ into intervals of order type $\omega$
and $f(A'_0)\subsetneq A''_0:=A_{\phi(0)}$.
\end{claim}

\noindent {\bf Proof of Claim \ref{Claim2}.}
According to Claim \ref{Claim1}, $f(A_0)\setminus A''_{0}$ has order
type $< \omega^{n}$. Since $f(A_0)$ has order type $\omega^n$, there
is some $x\in A_0$ such that $f(\{y : x\leq y\}\cap A_0)\subseteq
A''_{0}$.  Pick $a>x$ in $A_0$ such that the decomposition of
$A'_{0}:=\{x: a\leq x\}\cap A_0$ into intervals of order type $\omega$ is
induced by the decomposition of $C$ into intervals of order type $\omega$.
\hfill $\Box$

With these two claims the proof of the lemma goes as follows. First,
with no loss of generality, we may suppose that $f$ is not the
identity on $A_0$. Otherwise, let $k_0$ be the least integer $k$ such
that $f$ is not the identity on $A_{k_0}$. Let $C':= C\setminus
\cup_{k<k_{0}} A_k$ and $(A'_k)_{k<\omega}$ be the decomposition of
$C'$ into intervals of order type $\omega^n$. Then $A'_0=A_{k_{0}}$ and $f$
induces an embedding $f'$ of $C'$ into itself which is not the
identity on $A'_0$. Thus, we may replace $C$ and $f$ by $C'$ and $f'$.

With our supposition, set $C':= C_{\restriction A'_{0}}$. Let $h$ be
an order isomorphism of $C_{\restriction A''_0}$ onto $C'$ and $f':=
h\circ f$. Then $f'$ is not the identity on $A'_0$. Induction applied to $C'$ and $f'$ yields some $\alpha'\leq
\beta'<\omega^{n-1}$ such that $f$ is not the identity on $C'_{\alpha'}$
and $f'(C'_{\alpha'})\setminus C'_{\beta'}$ is finite. Let $\alpha$ and
$\beta$ be such that $C'_{\alpha'}\subseteq C_{\alpha}$ and $C_{\beta}=h^{-1}(C'_{\beta'})$.
\end{proof}

\begin{lemma}\label{lem:10}
For every $n<\omega$ the  ordinals $\omega$ and $\omega^{n+1}$ are orthogonal.
\end{lemma}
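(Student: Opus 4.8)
The plan is to invoke Theorem \ref{inf3}: to prove that $\omega$ and $\omega^{n+1}$ are orthogonal it suffices to build, on a countable set $V$, two linear orders $\mathcal L$ (of type $\omega$) and $\mathcal M$ (of type $\omega^{n+1}$) such that the bichain $B:=(V,\mathcal L,\mathcal M)$ is prime and embedding rigid. The construction I would use is direct (the parameter $n$ enters only through the block count $\omega^n$ and through Lemma \ref{lem:embeddingchain}). Decompose a chain of type $\omega^{n+1}$ into its $\omega^n$ consecutive intervals $(C_\alpha)_{\alpha<\omega^n}$ of type $\omega$, exactly as in Lemma \ref{lem:embeddingchain}, and let $V=\bigsqcup_{\alpha<\omega^n}C_\alpha$. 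On each piece $C_\alpha$ I place a copy of one of the orthogonal $\omega$-versus-$\omega$ bichains produced by Corollary \ref{pz4}, choosing \emph{distinct} copies for distinct $\alpha$; since that corollary supplies $2^{\aleph_0}$ such bichains that are pairwise $\nleqslant_{fin}$, and there are only countably many indices $\alpha<\omega^n$, this assignment is possible. Write $\mathcal M\restriction C_\alpha=m_\alpha$ and $\mathcal L\restriction C_\alpha=\ell_\alpha$ for the two (type $\omega$) orders of the gadget on $C_\alpha$.

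Now I define the two global orders. For $\mathcal M$ I stack the pieces: order the $C_\alpha$ by $\alpha$ and order inside $C_\alpha$ by $m_\alpha$; the lexicographic sum $\sum_{\alpha<\omega^n}\omega$ then has type $\omega^{n+1}$, and the $C_\alpha$ are precisely its decomposition into intervals of type $\omega$. For $\mathcal L$ I interleave the $\ell_\alpha$ into a single chain of type $\omega$: fixing an enumeration $(\alpha_j)_{j<\omega}$ of the index set and listing $C_{\alpha_j}$ as $c^{\alpha_j}_0,c^{\alpha_j}_1,\dots$ in $\ell_{\alpha_j}$-order, declare $c^{\alpha_j}_i<_{\mathcal L}c^{\alpha_{j'}}_{i'}$ according to $(j+i,j)<_{\mathrm{lex}}(j'+i',j')$. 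Each level $j+i=s$ is finite, so $\mathcal L$ has type $\omega$, it restricts to $\ell_{\alpha_j}$ on each piece, and every piece is scattered throughout $\mathcal L$. Thus $B\restriction C_\alpha$ is the orthogonal gadget, which is prime and embedding rigid.

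Embedding rigidity of $B$ is where Lemma \ref{lem:embeddingchain} does the work. Let $f$ be an embedding of $B$ and suppose $f\neq\mathrm{id}$. Then $f$ is a non-identity self-embedding of the $\mathcal M$-chain, which has type $\omega^{n+1}$, so Lemma \ref{lem:embeddingchain} applied to the decomposition $(C_\alpha)_{\alpha<\omega^n}$ yields indices $\alpha\leq\beta$ with $f$ non-identity on $C_\alpha$ and $f(C_\alpha)\setminus C_\beta$ finite. If $\alpha<\beta$, then deleting the finitely many exceptional points shows $C_\alpha\leq_{fin}C_\beta$ as bichains (via $f$), contradicting that the gadgets on $C_\alpha$ and $C_\beta$ were chosen pairwise $\nleqslant_{fin}$. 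If $\alpha=\beta$, the ``furthermore'' clause gives $f(C_\alpha)\subseteq C_\alpha$, so $f\restriction C_\alpha$ is a self-embedding of the gadget bichain on $C_\alpha$; being orthogonal, that gadget is embedding rigid, whence $f\restriction C_\alpha=\mathrm{id}$, contradicting that $f$ is non-identity on $C_\alpha$. Either way we reach a contradiction, so $f=\mathrm{id}$ and $B$ is embedding rigid.

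The step I expect to be the real obstacle is \emph{primality}, which the embedding argument does not touch. A common interval contained in a single piece $C_\alpha$ is trivial because the gadget there is prime, and a whole piece $C_\alpha$ fails to be $\mathcal L$-convex because the interleaving scatters it among the other pieces; the delicate case is a common interval meeting several pieces. Such a set is $\mathcal M$-convex, hence a final segment of some $C_\alpha$ together with initial parts of the following pieces, and one must show it is never $\mathcal L$-convex. I would verify this through $P:=(V,\mathcal L\cap\mathcal M)$ and $Comp(P)$ (Theorems \ref{zaguia} and \ref{thm:gallai-kelly}), analysing the cross-piece comparabilities created by the interleaving: because the within-piece orders are orthogonal, an $\mathcal M$-final segment of $C_\alpha$ is $\ell_\alpha$-scattered, so it cannot line up $\mathcal L$-convexly with the next pieces. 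Once primality is established, Theorem \ref{inf3} gives that $\mathcal L$ and $\mathcal M$ are orthogonal, which is the assertion of the lemma.
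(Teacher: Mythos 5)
Your construction and the paper's are in essence the same: decompose a chain of type $\omega^{n+1}$ into $\omega^n$ consecutive blocks of type $\omega$, install on the blocks pairwise $\nleqslant_{fin}$ orthogonal gadgets supplied by Corollary \ref{pz4}, and obtain embedding rigidity from Lemma \ref{lem:embeddingchain} combined with the $\nleqslant_{fin}$ property. Your rigidity argument is complete and correct, and it is essentially identical to the one in the paper.

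The genuine gap is exactly where you predict it: primality is never proved. The sentence ``because the within-piece orders are orthogonal, an $\mathcal M$-final segment of $C_\alpha$ is $\ell_\alpha$-scattered, so it cannot line up $\mathcal L$-convexly with the next pieces'' is a hope, not an argument, and routing it through $Comp(P)$ and Theorems \ref{thm:gallai-kelly} and \ref{zaguia} adds nothing. Two concrete cases are left open. First, your claim that a common interval inside a single piece is handled ``because the gadget there is prime'' overlooks that a subset of $C_{\alpha_j}$ which is $\mathcal L$-convex \emph{in $V$} is a much stronger condition than being $\ell_{\alpha_j}$-convex: in your diagonal order the only $\mathcal L$-consecutive pair lying in one piece is $\{c_0^{\alpha_0},c_1^{\alpha_0}\}$, and excluding it as a common interval needs a separate appeal to primality of the gadget on $C_{\alpha_0}$ (if those two elements were also $m_{\alpha_0}$-consecutive they would be a nontrivial common interval of that gadget). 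Second, in the cross-piece case the correct argument runs: if a common interval $I$ meets two pieces, $\mathcal M$-convexity makes $I\cap C_\alpha$ infinite for some $\alpha$, and then $\mathcal L$-convexity forces $I$ to contain every element beyond some finite level of the diagonal enumeration, so $V\setminus I$ is a \emph{finite initial segment of both} $\mathcal L$ and $\mathcal M$; one must then verify that the first $k$ elements of $\mathcal L$ and of $\mathcal M$ never coincide as sets for any $k\geq 1$, which for $k\leq 2$ again comes down to primality of the gadget on $C_{\alpha_0}$, and for $k\geq 3$ to the fact that the $\mathcal L$-initial segments already contain elements of two distinct pieces while the $\mathcal M$-initial segments stay inside the ordinal-first piece. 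Your construction does turn out to be prime, but none of these steps appears in your text, and they are not cosmetic: they use the gadgets' primality, which your sketch never invokes. By contrast, the paper engineers the construction so that this work almost disappears: it keeps the $\omega$-side equal to the natural order on $\NN$ and takes the blocks $(X_\alpha)_{\alpha<\omega^n}$ to be a partition in which no block contains two consecutive integers and the ordinal-first block $X_0$ (the odd numbers) omits $0$; then no nontrivial common interval can sit inside a block, a cross-piece common interval is immediately a final segment of $(\NN,\leq)$, and the contradiction is reached in a few lines (Claims \ref{Claim3} and \ref{Claim4}). If you want to keep your interleaving, you must write out the two cases above; otherwise adopt the paper's normalization of the blocks.
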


\begin{proof}
If $n=0$, the result follows from Theorem \ref{omega}. We suppose that
$n\geq 1$.

\begin{claim}\label{Claim3}
There is a family of $(X_\alpha)_{\alpha<\omega^n}$ of infinite
subsets of $\NN$ such that none contains a nontrivial interval and
$0\not \in X_0$.
\end{claim}

\noindent {\bf Proof of Claim \ref{Claim3}.}
Let $(X_{\alpha})_{\alpha<\omega^{n}}$ be a  partition of $\NN$ into
infinitely many pairwise infinite subsets of $\NN$, where $X_0$ is the set of odd integers.\hfill $\Box$

\begin{claim}\label{Claim4}
There is a chain $C:=(\NN, \leq_C)$ of order type $\omega^{n+1}$ such
that $(X_{\alpha})_{\alpha <\omega^{n+1}}$ is the decomposition of $C$
into intervals of order type $\omega$.  If $C$ is such a chain then the
bichain $(\NN, \leq, \leq_C)$ is prime. Furthermore, if for each
$\alpha<\omega^n$, $B_\alpha :=(X_\alpha, \leq_{\restriction
X_{\alpha}}, {\leq_C}_{\restriction X_{\alpha}})$ is embedding rigid
and for $\alpha<\beta$, $B_\alpha \nleqslant_{fin} B_\beta$ then
$(\NN, \leq, \leq_C)$ is semirigid.
\end{claim}

\noindent {\bf Proof of Claim \ref{Claim4}.}
The existence of the chain $C$ is pretty obvious: on each set
$X_{\alpha}$ choose a linear order of order type $\omega$, and for $\alpha
<\beta$ put every element of $X_{\alpha}$ before every element of
$X_\beta$. Suppose for a contradiction that $(\NN, \leq, \leq_C)$
is not prime. Let $I$ be a nontrivial autonomous set, that is an
interval for $\leq$ and $\leq_C$. Due to our condition on the family
$(X_\alpha)_{\alpha <\omega^{n}}$, no $X_\alpha$ can contain $I$. Thus
there are $\alpha,\beta$ with $\alpha\neq \beta$ such that $X_\alpha$
and $X_\beta$ meet $I$. We may suppose that $\alpha<\beta$. In such a
case $I$ contains infinitely many elements of $X_\alpha$. This implies
that $I$ is a final segment of $(\NN, \leq)$. For each $n\in \NN$, let
$\phi(n)$ be such that $n\in X_{\phi(n)}$.  For each $i\not \in I$,
$X_{\phi(i)}\cap I$ is infinite. It follows that $\NN\setminus I$
cannot contain two distinct elements, hence $\NN \setminus
I=\{0\}$. Since $0\not \in X_0$, $0$ is not the least element of $C$,
hence $I=\NN\setminus \{0\}$ is not an interval of $C$, a
contradiction.

Finally, we show that $(\NN, \leq, \leq_C)$ is embedding rigid. Suppose for a contradiction that there is a proper embedding, say $f$. Since $f$ is an embedding of $C$, Lemma \ref {lem:embeddingchain}
ensures that there are $\alpha \leq \beta<\omega^{n}$ such that:

\begin{enumerate}
\item  $f$ is not the identity on $X_{\alpha}$.
\item  $f(X_{\alpha})$ is contained in $X_{\alpha}$ if $\beta=\alpha$ and  almost contained in $X_{\beta}$ otherwise.
\end{enumerate}

Since $B_\alpha$ is embedding rigid, $\alpha\neq \beta$. Set
$X'_{\alpha}=X_{\alpha}\cap f^{-1} (X_{\beta})$. Then $f_{\restriction
X'_{\alpha}}$ is an embedding of $B_{\alpha \restriction X'_{\alpha}}$
into $B_{\beta}$. Since $X_{\alpha}\setminus X'_\alpha$ is finite
$B_{\alpha}\leq_{fin} B_\beta$, a contradiction.\hfill $\Box$

According to Corollary \ref{pz4} in Section \ref{section: omega},
there is a family $(B_\alpha)_{\alpha<\omega^n}$ satisfying the
conditions of Claim \ref{Claim4}. Hence, $\omega$ and $\omega^{n+1}$
are orthogonal.
\end{proof}

Let $\alpha$ be an ordinal and let $0<n<\omega$. For $i<n$ define:
\[\overline{i}(mod\; n):=\{\beta<\alpha : \beta=\gamma +i+kn \mbox{ for some limit ordinal } \gamma \mbox{ and } k<\omega\}.\]

\begin{lemma}\label{equiclasses}Let $\alpha$ be an ordinal, $0<n<\omega$   and $i<n$.
\begin{enumerate}[(a)]
\item If $f$ is an embedding of $\alpha$ into itself which is the identity on
$\overline{i} (mod\; n)$, then $f$ is the identity map on $\alpha$.
\item If $\alpha$ is a limit ordinal and $I$ is an interval of $\alpha$ such that $\overline{i}(mod\;n)\subseteq I$, then $\alpha\setminus \{0, \dots, i-1\} \subseteq I$.
\end{enumerate}
\end{lemma}

\begin{proof}
$(a)$ We suppose that $f$ is not the identity and we argue for a contradiction. Let
$\beta\in \alpha$ such that $f(\beta)\not = \beta$. Set $\beta_0:=
\beta$ and $\beta_{m+1}:=f(\beta_m)$ for $m\in \NN$. Since $\alpha$ is
an ordinal and $f$ is an embedding, we have $\beta_0<\beta_1<\cdots
\beta_m<\cdots$. Hence, there is some $i'\in \overline {i} (mod \;n)$
and some $m\in \NN$ such that $\beta_m<i'<\beta_{m+1}$. Since $f$ is
order preserving $\beta_{m+1}=f(\beta_m)<f(i')$. Hence, $f$ is not the
identity on $\overline{i}(mod \; n)$.\\
$(b)$ Observe that $\alpha\setminus \{0, \dots, i-1\}$ is the least interval containing $\overline i (mod \;n)$.
\end{proof}

\begin{lemma}\label{arith1}Let $\alpha$ and $\beta$ be two infinite ordinals with $\alpha$ orthogonal to $\beta$ and $\gamma$ be an ordinal such that $|\gamma|\leq |\alpha|$, then
\begin{enumerate}[(i)]
\item $\alpha$ is orthogonal to $\gamma+\beta$.
\item $\alpha$ is orthogonal to $\beta+\gamma + 1$.
\end{enumerate}
\end{lemma}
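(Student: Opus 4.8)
My plan is to reduce both statements, via Theorem \ref{inf3}, to exhibiting on a suitable ground set $V'$ a bichain $(V',\le_{A'},\le_C)$ that is prime and embedding rigid, with $\le_{A'}$ of type $\alpha$ and $\le_C$ of type $\gamma+\beta$ (for (i)) or $\beta+\gamma+1$ (for (ii)). I would start from a semirigid bichain $(V,\le_A,\le_B)$ witnessing $\alpha\perp\beta$, with $\le_A$ of type $\alpha$ and $\le_B$ of type $\beta$; thus $(V,\le_A,\le_B)$ is prime and embedding rigid and $|V|=|\alpha|=|\beta|$. Since $|\gamma|\le|\alpha|$, I would adjoin a disjoint set $W$ with $|W|=|\gamma|$ (and, for (ii), one further point $t$), keep $\le_A$ on $V$, and insert the new points into $(V,\le_A)$ one per \emph{successor gap} (between an element and its immediate $\le_A$-successor, never below the $\le_A$-minimum, at most one per gap); this is routine ordinal arithmetic and produces an order $\le_{A'}$ of type $\alpha$ in which every new point has an immediate $\le_{A'}$-predecessor and successor lying in $V$. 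A useful preliminary remark is that, because $\le_{A'}$ and $\le_C$ will have \emph{different} order types, the order-two embedding permitted by Theorem \ref{inf3} cannot occur; so it suffices to show that the only embedding preserving both orders is the identity.

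For (i) I would define $\le_C$ on $V':=V\sqcup W$ by placing $W$ as an initial block of type $\gamma$ below all of $V$, with $\le_C$ agreeing with $\le_B$ on $V$; then $\le_C$ has type $\gamma+\beta$ and $V$ is a final segment of the \emph{well order} $\le_C$. The rigidity argument is then clean: any embedding $f$ of the bichain is extensive for the well order $\le_C$, so for $v\in V$ we get $f(v)\ge_C v>_C w$ for every $w\in W$, whence $f(v)\in V$; thus $f(V)\subseteq V$, and $f\!\restriction_V$ is an injective map preserving $\le_A=\le_{A'}\!\restriction_V$ and $\le_B=\le_C\!\restriction_V$, i.e. an embedding of the original rigid bichain, so $f\!\restriction_V$ is the identity. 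Each $w\in W$ then sits in the unique successor gap between two now-fixed elements $p<_{A'}w<_{A'}q$ of $V$, and $p<_{A'}f(w)<_{A'}q$ forces $f(w)=w$; hence $f$ is the identity. Primality I would check by a short case analysis on how a nontrivial autonomous $I$ meets $V$ and $W$: on $V$ it restricts to a common interval of the prime bichain $(V,\le_A,\le_B)$, hence is trivial there, and the interleaving of $W$ in $\le_{A'}$ together with $W$ being $\le_C$-below $V$ forces $I$ to be trivial or all of $V'$ (here I would use the freedom in the choice of gaps, e.g. keeping the $\le_C$-top of $W$ away in $\le_{A'}$ from the $\le_B$-minimum of $V$). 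Theorem \ref{inf3} then yields $\alpha\perp(\gamma+\beta)$.

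For (ii) I would take $V':=V\sqcup W\sqcup\{t\}$ and define $\le_C$ by keeping $\le_B$ on $V$, placing $W$ as a block of type $\gamma$ above all of $V$, and $t$ above everything; so $\le_C$ has type $\beta+\gamma+1$ and, crucially, has a \emph{maximum} $t$ (this is the role of the $+1$). Since any embedding $f$ is $\le_C$-extensive and $t$ is the $\le_C$-maximum, $f(t)=t$ automatically; moreover $\le_C$-extensiveness traps the top block, giving $f(W\cup\{t\})\subseteq W\cup\{t\}$. The difficulty, and what I expect to be the main obstacle, is the \emph{reverse} confinement: now $V$ is an initial segment of the well order $\le_C$, so extensiveness no longer forces $f(V)\subseteq V$, and the clean argument of (i) does not transcribe. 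My plan to overcome this is to exploit the fixed maximum $t$: its immediate $\le_{A'}$-predecessor lies in $V$ and is fixed (by extensiveness, exactly as in (i)), and I would propagate this pinning through the gap structure, combining it with $f(W\cup\{t\})\subseteq W\cup\{t\}$ and the rigidity of the original bichain to force $f\!\restriction_V=\mathrm{id}$ and then, as in (i), $f\!\restriction_W=\mathrm{id}$. Establishing this confinement of $V$ rigorously — equivalently, ruling out an embedding that shifts part of $V$ up into $W$ while respecting both well orders — is the crux of (ii); once it is in hand, primality is verified exactly as in (i) and Theorem \ref{inf3} gives $\alpha\perp(\beta+\gamma+1)$.
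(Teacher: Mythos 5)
Your part (i) is correct, and it takes a genuinely different route from the paper's. The paper never enlarges the ground set: it realizes $\alpha$ as a chain $C$ on $V$, carves the set $X$ (of size $|\gamma|$) out of the ``odd positions'' of $C$ --- so that between any two elements of $X$ there is an element of $Y:=V\setminus X$ --- builds $C'$ of type $\gamma+\beta$ with $X$ as initial block and a copy of $\beta$ orthogonal to $C\restriction Y$ on $Y$, and finishes rigidity with Lemma \ref{equiclasses}(a) (an embedding of an ordinal which is the identity on a residue class mod $n$ is the identity). You instead adjoin new points inside successor gaps; your extensiveness argument ($V$ is a final segment of the well order $\leq_C$, hence $f(V)\subseteq V$), the appeal to embedding rigidity of $(V,\leq_A,\leq_B)$ (via Remark \ref{rem:semi-prime}), and the squeeze $p<_{A'}f(w)<_{A'}q$ are all sound, and your primality analysis, with the stated precaution about the $\leq_C$-top of $W$, goes through. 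One repair is needed: inserting points into \emph{arbitrary} successor gaps does not preserve the type $\alpha$; for instance, inserting a point into the gap between the two last elements of $\omega+2$ produces $\omega+3$. Writing $\alpha=\alpha'+n$ with $\alpha'$ a limit ordinal, you must confine all insertions to gaps inside the $\alpha'$-part (there are $|\alpha'|=|\alpha|\geq|\gamma|$ such gaps, and duplicating points below a limit ordinal does not change its type), after which your claim is indeed routine.

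Part (ii), however, has a genuine gap, which you yourself flag: the confinement $f(V)\subseteq V$ is never established, and the ``propagation'' you sketch cannot deliver it. Fixing $t$ and its immediate $\leq_{A'}$-predecessor, and then propagating downward along immediate $\leq_{A'}$-predecessors, only fixes elements at finite $\leq_{A'}$-distance below an already fixed point; it says nothing about the points $\leq_{A'}$-above $t$, and nothing in your construction rules out an embedding carrying a final $\leq_C$-segment of $V$ into $W$ when $\gamma$ is infinite. (When $\gamma$ is finite your construction does work, since $f$ restricted to the finite chain $W\cup\{t\}$ is then forced to be the identity; the infinite case is exactly where the idea is missing.) What is needed is to pin the $\leq_C$-least element $w_0$ of $W$: once $f(w_0)=w_0$, every $v\in V$ satisfies $f(v)<_C f(w_0)=w_0$, hence $f(v)\in V$, and the rest of your argument runs. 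This pinning must be engineered into the construction, for example by placing $w_0$ in the successor gap immediately $\leq_{A'}$-below the gap holding $t$: then $t$ and its $\leq_{A'}$-predecessor are fixed and the squeeze forces $f(w_0)=w_0$. This is precisely the paper's device in its case ($\gamma$ infinite) of (ii): there the $C'$-maximum $v$ is chosen to be the $C$-successor of $u$, where $u$ is the $C'$-least element of the final block $X'$; extensiveness on $C'$ gives $f(v)=v$, the successor squeeze in $C$ gives $f(u)=u$, and then $f(Y')\subseteq Y'$ because every element of $Y'$ lies $\leq_{C'}$-below $u$. Without this (or an equivalent) pinning device, your part (ii) remains a plan rather than a proof.
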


\begin{proof}
Let $\alpha'$ be a limit ordinal and $n<\omega$ such that $\alpha= \alpha'+n$.
Let $C:= (V, \leq )$ be a chain with order type $\alpha$ and let $h$ be an order isomorphism from $\alpha$ onto $C$. Let $V'$ be the image of $\alpha'$ by $h$ and $U:=V\setminus V'$ (hence $\vert U\vert =n$).
Let
$\{V'_0,V'_1\}$ be the partition of $V'$, which is the image by $h$ of the partition of $\alpha'$ into
$\overline{0}(mod\;2)$ and $\overline{1}(mod \;2)$. Let $X\subseteq V'_1$ be such
that $X$ is an initial interval of $C\restriction V'_1$  and
$|X|=|\gamma|$ (this is possible since $\vert \gamma\vert  \leq \vert\alpha\vert$) and set $Y:=V\setminus X$. Observe that $C\restriction Y$ has order type $\alpha$.

(i) We notice  that if $\gamma<\omega$, then $\gamma+\beta$ is
isomorphic to $\beta$ and hence $\gamma+\beta$ is orthogonal to
$\alpha$. So we may assume that $\gamma$ is infinite.
We define  $C':=(V,\leq_{C'})$
such that  $C'\restriction X$ is an initial interval of $C'$ and has order type
$\gamma$, $Y$ is a
final interval of $C'$ with order type $\beta$ and $C'\restriction Y$ is orthogonal to
$C\restriction Y$. \\

By construction,  $C'$ has  order type $\gamma+\beta$.
We claim that $C'$ is orthogonal to $C$. We prove first that there is no proper common interval. Let $I$ be a
common interval  of $C$ and $C'$ such that   $|I|\geq 2$. We prove that $I=V$.
Let $J:= I\cap Y $. Then $J$ is an interval of $C\restriction Y$ and of $C'\restriction Y$. Since $C\restriction Y$ and $C'\restriction Y$ are orthogonal, either $J$ is empty, or $J=\{y\}$ for some $y \in Y$ or $J=Y$. In the first case, $I \subseteq X$, but  since, in $C$,  there is an element of $Y$ between any two elements of $X$, $I$ cannot be an interval of $C$. Thus this case is impossible. In the second case, there is some  $x\in I \cap X$. Since $\beta$ is a limit ordinal, the interval $[x,y]$ of $C'$ contains infinitely many elements of $X$, hence $I\cap X$ is infinite.  Since $X\subseteq V'_1$,  $I$ contains infinitely many elements of $Y$, a contradiction. Thus, we have $J=Y$. Since $V'$ is the unique  interval of $C$ containing $Y$,we have  $V' \subseteq I$ and since $Y\subseteq I$ we get $I=V$, as required.

 Let $f$ be an injective order preserving map
common to $C$ and $C'$. Then $f(Y)\subseteq Y$ (indeed, since  $C'$ is
 well ordered, $f$ is extensive on $C'$; since $Y$ is a final segment of $C'$, it follows that $f(Y)\subseteq Y$). Since $C'\restriction Y$ is orthogonal to
$C\restriction Y$, it
follows that  $f$  is the identity map on $Y$. From that, it follows that   $f(X)\subseteq X$ and also $f(V')\subseteq V'$. From Lemma \ref{equiclasses} (i) applied to $\alpha'$ and $f_{\restriction V'}$ it follows  that $f$ is the identity map on $V'$, thus $f$ is the identity map and we are done.\\

(ii) We consider two cases:\\

\noindent a) $\gamma$ is an infinite ordinal.

The proof follows the same lines as the proof of (i).
Let $u$ be the least element of $C$ and $v$ be the least element of $X$ in $C$, hence $v$ is the successor of $u$ in $C$. We set $Y':=Y\setminus\{u\}$ and $X':= X \cup \{u\}$.
We define  $C':=(V,\leq_{C'})$
such that  $C'\restriction Y'$ is an initial interval of $C'$,  has order type
$\beta$, and is orthogonal to $C\restriction Y'$, and $X'$ is a
final interval of $C'$ with order type $\gamma+1$ and $u$ and $v$ are its least and  largest element.

By construction,  $C'$ has  order type $\beta+\gamma+1$.
We claim that $C'$ is orthogonal to $C$. As before, we prove first that $C$ and $C'$ have  no proper common interval. Let $I$ be a
common interval  to $C$ and $C'$ such that   $|I|\geq 2$. We prove that $I=V$.

We set  $J:= I\cap Y'$. Then $J$ is an interval of $C\restriction Y'$ and of $C'\restriction Y'$. Since $C\restriction Y'$ and $C'\restriction Y'$ are orthogonal, either $J$ is empty, or $J=\{y'\}$ for some $y' \in Y'$ or $J=Y'$. In the first case, $I \subseteq X'$, but  since, in $C$,  there is an element of $Y$ between any two elements of $X$, $I$ cannot be an interval of $C\restriction X$. Hence $I=\{u, v\}$, but since  $u$ and $v$ are the extreme elements of the infinite chain $C'\restriction X'$,  this is impossible. In the second case, there is some  $x'\in I \cap X'$. The interval  $[y',x']$ of $C'$ must contain  the least element of $C'\restriction X'$, that is $u$; having more than one element, $I$  must contain  $v$, hence it contains $X'$, thus $I\cap X$ is infinite.  Since $X\subseteq X'_1$,  $I$ contains infinitely many elements of $Y$, a contradiction. Thus, we have $J=Y'$. Since $V'\setminus \{u,v\}$ is the smallest  interval of $C$ containing $Y'$ and $Y'\subseteq I$, we have $V\setminus \{u,v\}\subseteq I$.  Since  $V\setminus \{v\}$ is not an interval of $C$ and  $V\setminus \{u\}$ is not an interval of $C'$ it follows that $I=V$ as required.

Let $f$ be an injective order preserving map for both $C$ and $C'$. Then $f(v)=v$ (this is
because $f$ is extensive on $C'$ and $v$ is the largest element of $C'$). Also,
$f(u)=u$ (this is because $f(v)=v$ and $v$ is the successor of $u$ in
$C$). Since every element of $Y'$ is below $u$ in $C'$ we infer that
$f(Y')\subseteq Y'$. Since $C\restriction Y'$ and $C'\restriction Y'$ are orthogonal, $f$ is the identity on $Y'$. Hence,  $f(X')\subseteq X'$ and, from the fact that $f$ fixes the first two element of $C$, it follows that $f(V')\subseteq V'$. From Lemma \ref{equiclasses} (i) applied to $\alpha'$ and $f_{\restriction V'}$ it follows that $f$ is the identity map on $V'$, thus $f$ is the identity and we are done.\\

\noindent b) $\gamma$ is finite.

In this case, it suffices to prove that the conclusion holds if $\gamma=0$. Indeed,  a straightforward  induction yields the general conclusion.

Let $\beta'$ be a limit ordinal and $m<\omega$ such that $\beta= \beta'+m$. Let $C':= (V, \leq')$ be a chain of type $\beta$ with $C'$ orthogonal to $C$. Let $U'$ be the final segment of $C'$ such that $\vert U'\vert = m$. Since $U'$ is finite, $V'\setminus U'$ contains two elements $a$ and $b$ which are consecutive in $C$. Add to $V$ an extra element $v$. Let $C'_1$ be the chain obtained by putting $v$ as its last element and $C_1$ be obtained by inserting  $v$ between $a$ and $b$. Trivially, $C'_1$ has order type $\beta+1$;  since $\{a,b\}\subseteq V'$,   $C_1$ has order type $\alpha$. We claim that $C_1$ and $C'_1$ are orthogonal.

We prove first that there is no proper common interval. Let $I$ be a
common interval  to $C_1$ and $C'_1$ such that   $|I|\geq 2$. We prove that $I=V\cup \{v\}$.
Let $J:= I\cap V$. Then $J$ is an interval of $C_1\restriction V$ and of $C'_1\restriction V$. Since $C_1\restriction V=C$ and $C'_1\restriction V=C'$, these chains are orthogonal, hence either $J$ is empty, or $J=\{w\}$ for some $w \in V$ or $J=V$. The first case is impossible since  $\vert I\vert \geq 2$. In the second case, $v\in I$. But then $I$,  as an interval of $C'_1$, contains $a$ or $b$. But then, as an interval of $C'_1$, $I$ contains either the interval $[a, v]$ or the interval $[b, v]$ of $C'_1$. Both intervals are infinite, hence $I$ is infinite, contradicting the fact that $J$ is a singleton. Thus this case is impossible.  It follows that $J= V$. Since in $C_1$, $v$ is between two elements of $I$,  $v\in I$, thus $I=V\cup \{v\}$ as required.

Let $f$ be an injective order preserving map common to $C_1$ and $C'_1$. Then $f(v)=v$ (this is
because $f$ is extensive on $C'_1$ and $v$ is the largest element of $C'_1$). Thus $f$ induces an order preserving map of $C$ and $C'$. Since these chains are perpendicular, $f$ is the identity on $V$. It follows that $f$ is the identity on $V\cup \{v\}$ as required. The proof of the lemma is now complete.
\end{proof}

\noindent\emph{Proof of} Theorem \ref{cio1}.

Let $\alpha$ be a countable ordinal such that
$ind(\alpha)<\omega^\omega.$ We have $ind(\alpha)=\omega^n$ with
$n<\omega$. If $n=0$ then $\alpha= \omega +\gamma +1$. According to
Theorem \ref{omega}, $\omega$ is orthogonal to itself. Hence from
$(ii)$ of Lemma \ref{arith1}, $\omega$ is orthogonal to $\omega +
\gamma +1 =\alpha$. If $n>0$ apply Lemma
\ref{lem:10} and $(i)$ of Lemma \ref{arith1}. \hfill $\Box$

\begin{theorem} \label{cio2}
If $\alpha$ and $\beta$ are two countable ordinals, with $\omega +
1\leq \alpha\leq \beta$, then $\alpha$ and $\beta$ are orthogonal.
\end{theorem}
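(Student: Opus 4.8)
The plan is to prove orthogonality for all such pairs by reducing, via Lemma \ref{arith1} together with the symmetry of orthogonality, to a short list of ``core'' pairs, and then to settle those by a block construction in the spirit of Lemma \ref{lem:10}. Since ``$\alpha \perp \beta$'' is symmetric in $\alpha$ and $\beta$ (both orders live on one set), Lemma \ref{arith1} may be applied to either coordinate in turn: from an orthogonal pair one may prepend an arbitrary countable ordinal to one component, or append ``$\gamma+1$'' to it, keeping the other component fixed (the cardinality hypothesis is automatic here, everything being countable). Writing each of $\alpha,\beta$ in Cantor normal form, every ordinal $\ge \omega+1$ is obtained from its least infinite indecomposable tail $\omega^{a}$ ($a\ge 1$) by first appending a finite/successor part and then prepending the earlier normal-form terms; hence one hopes it suffices to realize orthogonality for the indecomposable pairs $(\omega^{a},\omega^{b})$ that actually arise and to push the result back up to $(\alpha,\beta)$ by the two padding operations, coordinate by coordinate. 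First I would record this reduction carefully, noting exactly which pairs $(\omega^{a},\omega^{b})$ it produces and, crucially, which it \emph{cannot} reach.

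The base constructions generalize the proof of Lemma \ref{lem:10}, specifically Claim \ref{Claim4}. There the second order had type $\omega^{n+1}$, decomposed into $\omega^{n}$ consecutive blocks of type $\omega$; the first order was an interleaving of the blocks into a single chain of type $\omega$; and each block carried one of the pairwise non-embeddable, embedding-rigid atoms supplied by Corollary \ref{pz4}, with a distinguished element forcing primality. I would imitate this for $(\omega^{a},\omega^{b})$: decompose the type-$\omega^{b}$ order into blocks along the block decomposition of $\omega^{b}$, attach distinct rigid atoms, and arrange the type-$\omega^{a}$ order as the appropriate interleaving. Primality is read off as in Claim \ref{Claim4} (an autonomous set meeting two blocks is forced to be everything, and the marker excludes the exceptional singleton), so by Theorem \ref{inf3} the remaining task is embedding rigidity. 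Here Lemma \ref{lem:embeddingchain} is the engine: a nontrivial self-embedding would, level by level, carry one block almost into a later block, and Lemma \ref{equiclasses} together with the pairwise non-embeddability of the atoms forbids this, forcing the identity. I would run this as an induction on the block decomposition, with $\omega\perp\omega^{m}$ ($m$ finite) from Lemma \ref{lem:10} as the seed.

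The genuinely new difficulty, and the step I expect to be the main obstacle, is the case $\omega\beta=\beta$ (equivalently $ind(\beta)\ge \omega^{\omega}$, i.e.\ $\beta$ a fixed point of $\xi\mapsto\omega\xi$). Here the naive reduction breaks down: the core pair can be $(\omega,\omega^{\omega})$, and by Theorem \ref{cio1} one has $\omega\not\perp\omega^{\omega}$, so the indecomposable tail of $\alpha$ cannot simply be matched against that of $\beta$. Worse, exactly as in Lemma \ref{lem:notrigid}, the Sierpi\'nski-type embedding of Lemma \ref{lem:sierp} furnishes a nontrivial self-embedding of any type-$(\omega,\beta)$ bichain, which is precisely why $\alpha=\omega$ fails. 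The point of the hypothesis $\omega+1\le\alpha$ is to defeat this shift: the structure of $\alpha$ beyond $\omega$ (for a successor, the $\le_{\alpha}$-largest element; for a limit, a richer tail) must be kept, not stripped, and used as an \emph{anchor}.

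Concretely, in this case I would refuse to reduce $\alpha$ all the way to its core, place the distinguished extra element of $\alpha$ at a controlled position of the type-$\beta$ order, and argue that any common self-embedding fixes that element (being $\le_{\alpha}$-extremal) and is then, by extensivity on the two well orders together with Lemma \ref{equiclasses}, driven downward to the identity. Making this anchoring argument uniform across all fixed points $\beta$, and dovetailing it with the inductive block construction of the previous paragraph, is the crux; once it is in place, the remaining successor/prefix adjustments passing from the anchored core pairs to arbitrary $(\alpha,\beta)$ with $\omega+1\le\alpha\le\beta$ are handled mechanically by Lemma \ref{arith1} and symmetry.
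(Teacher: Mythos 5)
Your reduction framework is sound as far as it goes, and you have correctly located the pairs it cannot reach: those where the last infinite indecomposable term of $\alpha$ is $\omega$ while $ind(\beta)\geq \omega^{\omega}$ (for instance $\alpha=\omega+1$, $\beta=\omega^{\omega}$), whose ``core'' $(\omega,\omega^{\omega})$ is non-orthogonal by Theorem \ref{cio1}. But your proposal does not close this case: the final paragraph announces an ``anchoring'' strategy without supplying the construction. You never say what the two orders on a common set are, where the anchor sits inside the type-$\beta$ order, how nontrivial common intervals are excluded, or how extensivity and Lemma \ref{equiclasses} actually force the identity; you say yourself that making this uniform ``is the crux.'' Since these pairs are exactly where the content of the theorem is concentrated, this is a genuine gap, not a verification left to the reader. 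There is a second, independent gap: the claim that the core pairs $(\omega^{a},\omega^{b})$ with $2\leq a\leq b$ follow by ``imitating'' Claim \ref{Claim4}. The engine there, Lemma \ref{lem:embeddingchain}, is proved only for chains of type $\omega^{n+1}$ with $n<\omega$, by induction on $n$; when $b\geq\omega$ the decomposition of $\omega^{b}$ into intervals of type $\omega$ is indexed by $\omega^{b}$ itself (because $\omega\cdot\omega^{b}=\omega^{b}$), so that induction has nothing to recurse on, and the primality argument of Claim \ref{Claim4} uses specifically that the first chain is $(\NN,\leq)$, which is no longer available once the first chain has type $\omega^{a}$ with $a\geq 2$.

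For comparison, the paper's proof avoids both problems with a single direct construction whose only input is Theorem \ref{omega} (the case $\alpha=\beta=\omega+1$ being dispatched first by Lemma \ref{arith1}~(ii) applied twice). Writing $\alpha=\omega+1+\alpha'$ and $\beta=\omega+1+\beta'$, it takes orthogonal orders $\leq_{A},\leq_{B}$ of type $\omega$ on a set $V$, realizes $\alpha'$ and $\beta'$ on disjoint sets $W_{\alpha},W_{\beta}$ of \emph{odd} integers, and on $V':=V\cup W_{\alpha}\cup W_{\beta}\cup\{\omega\}$ defines $\leq_{\alpha}$ so that its initial $\omega$-segment is $V\cup W_{\beta}$ (elements of $W_{\beta}$ kept at their natural integer positions, the remaining positions filled by $V$ in the order $\leq_{A}$), followed by the anchor $\omega$, followed by $W_{\alpha}$ carrying $\alpha'$; the order $\leq_{\beta}$ is defined symmetrically. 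Absence of nontrivial common intervals is checked directly, and a common embedding, being extensive for both well orders, maps each final segment $\{\omega\}\cup W_{\alpha}$ and $\{\omega\}\cup W_{\beta}$ into itself and hence fixes $\omega$; it therefore maps $V$ into $(V\cup W_{\beta})\cap(V\cup W_{\alpha})=V$, is the identity on $V$ because $\leq_{A}$ and $\leq_{B}$ are orthogonal, and is then the identity on $W_{\alpha}\cup W_{\beta}$ because each such point is squeezed, in the appropriate order, between consecutive fixed points lying in $V$. This works uniformly for every countable $\beta$, fixed point of $\xi\mapsto\omega\xi$ or not, with no normal forms, no induction, and no extension of Lemma \ref{lem:embeddingchain}. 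Your anchoring intuition is exactly right---the point $\omega$ is such an anchor---but the idea that makes it work, namely interleaving each ordinal's tail sparsely into the $\omega$-part of the \emph{other} order, is absent from your proposal, and it is essentially the whole proof.
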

\begin{proof}

The case  $\alpha = \beta = \omega + 1$ follows from the fact that $\omega$ is orthogonal to $\omega$ and Lemma \ref{arith1} (ii) applied twice. Thus we may suppose $\beta \geq \omega+2$.

Let $\alpha'$ and $\beta'$ be such that $\alpha= \omega+1+ \alpha'$ and $\beta = \omega + 1 + \beta'$. Let $V$ be a countably infinite set disjoint from $\omega+1$ and
 let $\leq_A$ and $\leq_B$ be two orthogonal linear orders on $V$ of order type $\omega$. Let $W_{\alpha}$ and  $W_{\beta}$ be two disjoint subsets of the set of odd integers with cardinality $\vert \alpha'\vert $ and $\vert \beta'\vert $ respectively. Set $V':=V\cup W_{\alpha}\cup W_{\beta}\cup \{\omega\}$. We define two orthogonal linear orders $\leq_{\alpha}$ and $\leq_{\beta}$ on $V'$ with order type $\alpha$ and $\beta$ respectively, as follows

Let $f_{\alpha}$ be the order isomorphism
from $\omega\setminus W_{\beta}$ onto  $(V,\leq_A)$, let $g_{\alpha}$ be any bijection from $\alpha\setminus (\omega +1)$ onto  $W_{\alpha}$ and $1_{\beta}$ be the identity map on $W_{\beta}\cup \{\omega\}$. Then $h_{\alpha}:= f_{\alpha} \cup g_{\alpha}\cup  1_{\beta}$ is a bijective map from $\alpha$ onto $V'$. The order $\leq_{\alpha}$ is the image by $h_{\alpha}$  of the order on $\alpha$, thus has the same order type.  We define $\leq_{\beta}$ similarly with  $f_{\beta}$, $g_{\beta}$ and $1_{\alpha}$.

To show that $\leq_{\alpha}$ and $\leq_{\beta}$ are orthogonal, we first show that  they have no nontrivial common interval. So
suppose for a contradiction that there is some nontrivial common interval $I$. If
$I$ were to meet $V$ in at least 2 places, then since $I \cap V$ is a common interval
of $\leq_A$ and $\leq_B$, which are orthogonal, we would have $V \subseteq I$;  since $W_{\beta}$ is included in the least interval of $\omega$ containing $h_{\alpha}^{-1}(I)$ we would have   $W_{\beta} \subseteq I$
and,  similarly,  $W_{\alpha} \subseteq I$ and this would imply $\omega \in I$, hence $I=V'$, contradicting the non triviality of $I$. If $I$ were to meet $V\cup \{\omega\}$ in two places,
then it would also have to meet $V$ in two places, and we have just shown that
this cannot happen. So $I$ must meet either $W_{\alpha}$ or $W_{\beta}$. If it meets $W_{\alpha}$ then
it must also meet $V$ since it is a nontrivial interval of $\leq_{\beta}$ and $W_{\alpha}$ does not
contain any two successive numbers. But then since it meets both $V$ and $W_{\alpha}$ and
is an interval of $\leq_{\alpha}$ it must contain at least two elements of $V$, which is the
desired contradiction. The case that $I$ meets $W_{\beta}$ leads to a contradiction in a
symmetric way.

Next, we must show that there is no nontrivial embedding $f $ of $(V', \leq_{\alpha},\leq_{\beta})$ into itself. Suppose for a contradiction that there is such an
embedding. It must preserve the final segment $\{\omega\} \cup W_{\alpha}$ of $\leq_{\alpha}$ and the final segment $\{\omega\} \cup W_{\beta}$ of $\leq_{\beta}$, and so must fix
$\omega$. Thus since it respects $\leq_{\alpha}$ it must map $V$ into $V \cup W_{\beta}$ and since it respects $\leq_{\beta}$ it must map
$V$ into $V \cup W_{\alpha}$: putting these two facts together, it must map $V$ into itself.
But then since $\leq_A$ and $\leq_B$ are orthogonal, the restriction of $f$ to $V$ must be the
identity map. But then since $f$ respects $\leq_{\beta}$ the restriction of $f$ to $W_{\alpha}$ must be the
identity map. Similarly,  the restriction of $f$ to $W_{\beta}$ must be the identity map. Thus $f$ itself must be the identity map.
\end{proof}

\ackname The authors are grateful to an anonymous referee whose comments have improved the presentation of the paper and for offering a nice argument reproduced above that simplified the proof of Theorem \ref{cio2} and also an alternative argument for the proof of Lemma \ref{pz2}.

\end{document}